	\def\?[#1]{\textbf{[#1]}\marginpar{\Large{\textbf{??}}}}%
\numberwithin{equation}{section}
\newcommand{\be}{\begin{equation}}
\newcommand{\ee}{\end{equation}}
\newcommand{\loc}{\operatorname{loc}}
\theoremstyle{plain}
\newtheorem{thm}{Theorem}
\newtheorem{prop}{Proposition}[section]
\newtheorem{cor}[prop]{Corollary}
\newtheorem{lem}[prop]{Lemma}
\theoremstyle{definition}
\newtheorem{rem}[prop]{Remark}
\numberwithin{equation}{section}
\def\squarebox#1{\hbox to #1{\hfill\vbox to #1{\vfill}}}
\title[Concerning the pathological set in probabilistic well-posedness 
]
{
Concerning the pathological set in the context of probabilistic well-posedness 
}
\author[C-M. Sun]{Chenmin Sun}
\address{Universit\'e de Cergy-Pontoise, Laboratoire de Math\'ematiques AGM, UMR  8088 du CNRS, 2 av. Adolphe Chauvin
	95302 Cergy-Pontoise Cedex, France }
\email{chenmin.sun@u-cergy.fr}
\author[N. Tzvetkov]{Nikolay Tzvetkov}
\address{Universit\'e de Cergy-Pontoise, Laboratoire de Math\'ematiques AGM, UMR  8088 du CNRS, 2 av. Adolphe Chauvin
95302 Cergy-Pontoise Cedex, France }
\email{nikolay.tzvetkov@u-cergy.fr}
\def\11{{\rm 1~\hspace{-1.4ex}l} }
\def\R{\mathbb R}
\def\N{\mathbb N}
\def\T{\mathbb T}
\begin{document}

	\begin{abstract}
	 We prove a complementary result to the probabilistic well-posedness for the nonlinear wave equation.
	 More precisely, we show that there is a dense set $S$ of the Sobolev space of super-critical regularity such that (in sharp contrast with the probabilistic well-posedness results) the family of global smooth solutions, generated by the convolution with some approximate identity of the elements of $S$, does not converge in the space of super-critical Sobolev regularity.
	 \\ 
	
	{
\large R\'esum\'e.} On d\'emontre un r\'esultat  compl\'ementaire \`a ceux manifestant le caract\`ere bien pos\'e probabiliste de l'\'equation des ondes avec des donn\'ees initiales de r\'egularit\'e de Sobolev super critique par rapport au changement d'\'echelle laissant invariant l'\'equation.  
	\end{abstract}

	\maketitle 
	\setlength{\parskip}{0.3em}  
	
	\section{Introduction}   
	\label{in}
In this work, we are interested in the three dimensional nonlinear wave equation
\begin{equation}\label{wave:main}
\begin{cases}
& \partial_t^2u-\Delta u+|u|^{2\sigma}u=0,\quad (t,x)\in\R\times\T^3,\\
&(u,\partial_tu)|_{t=0}=(f,g)\in\mathcal{H}^s(\T^3),
\end{cases}
\end{equation}
where $u$ is a real-valued function and 
$$ 
\mathcal{H}^s(\T^3):=H^s(\T^3)\times H^{s-1}(\T^3).
$$
The nonlinear wave equation \eqref{wave:main} is a Hamiltonian system with conserved energy
$$ H[u]:=\frac{1}{2}\int_{\T^3}|\nabla u|^2dx+\frac{1}{2\sigma+2}\int_{\T^3}|u|^{2\sigma+2}dx.
$$
It was shown (see \cite{Gri,ShSt}) that when $\sigma\leq 2$, the problem \eqref{wave:main} possesses a global strong solution in the energy space $\mathcal{H}^1(\T^3)$. By replacing $\T^3$ to $\R^3$, the scaling
$$ u\mapsto u_{\lambda}(t,x):=\lambda^{\frac{1}{\sigma}}u(\lambda t,\lambda x)
$$
keeps the equation \eqref{wave:main} invariant. This leads to the critical regularity index $s_c=\frac{3}{2}-\frac{1}{\sigma}\leq 1$. Intuitively, for $s<s_c$ if the initial data is concentrated at the frequency scale $\gg 1$ and is of size $1$ measured by the $\mathcal{H}^s$ norm, then the nonlinear part in the dynamics of \eqref{wave:main} is dominant and it causes instability of the $\mathcal{H}^s$ norm of the solution. This is called a norm inflation and it was extensively studied, see  \cite{CCTao},\cite{Le01},\cite{Le05} in the context of nonlinear wave equations.  For instance, it was shown in \cite{CCTao} that there exists a sequence of smooth initial data whose ${\mathcal H}^s$ norms converge to zero, while the ${\mathcal H}^s$ norms of the obtained sequence of solutions amplifies  at very short time. We also refer to \cite{Li} where a different concentration phenomenon, related to the Lorentz invariance of the wave equation, is observed. 

In \cite{BTz08} and \cite{BTz14}, by using probabilistic tools,  N.~Burq and the second author showed that  problem \eqref{wave:main} with cubic nonlinearity still possesses global strong solutions for a "large class" of functions of super-critical regularity.  
The result was  further extended to $1\leq\sigma\leq 2$ in \cite{OhPo} and \cite{SXia}.  More precisely, the following statement follows from  \cite{BTz14},\cite{OhPo},\cite{SXia}.
 \begin{thm} 
 \label{thmproba}
 Let $1\leq\sigma\leq 2$ and $1-\frac{1}{\sigma}<s<s_c=\frac{3}{2}-\frac{1}{\sigma}$. Then there is a dense set $\Sigma\subset\mathcal{H}^s(\T^3)$ satisfying 
 $ \Sigma \cap \mathcal{H}^{s'}(\T^3)=\emptyset$   for every  $s'>s$  such that the following holds true. 
 For every $(f,g)\in\Sigma$,  let $(f_{n},g_{n})$ be the sequence in $ C^{\infty}(\T^3)\times C^{\infty}(\T^3)$ defined by the regularization by convolution, i.e.
 $$ 
 f_{n}=\rho_{n}\ast f,\quad g_{n}=\rho_n\ast g,
 $$
 where $(\rho_n)_{n\in\N}$ is an approximate identity. 
 Denote by $(u_n(t),\partial_tu_n(t))$ the smooth solutions of \eqref{wave:main} with the smooth initial data $(f_{n},g_{n})$. Then there exists a limit object $u(t)$ such that for any $T>0$,
 $$ \lim_{n\rightarrow\infty}\big\|(u_n(t),\partial_tu_n(t))-(u(t),\partial_tu(t)) \big\|_{L^{\infty}([-T,T];\mathcal{H}^s(\T^3) )}=0.
 $$
 Moreover $u(t)$ solves \eqref{wave:main} in the distributional sense.
 \end{thm}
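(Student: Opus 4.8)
The plan is to reproduce, in outline, the Burq--Tzvetkov randomization scheme together with the deterministic perturbative analysis that globalizes it. First I would fix a countable dense family $(\phi_k,\psi_k)_{k\in\N}$ of smooth pairs in $\mathcal{H}^s(\T^3)$ and a single rough pair $(\Phi,\Psi)\in\mathcal{H}^s(\T^3)$ whose Fourier coefficients are chosen so that $(\Phi,\Psi)\notin\mathcal{H}^{s'}(\T^3)$ for every $s'>s$. Decomposing $\Phi,\Psi$ into unit-scale frequency blocks and multiplying the blocks by an i.i.d.\ sequence of Bernoulli (or Gaussian) random variables $(g_n(\omega))$ produces randomized data $(\Phi^\omega,\Psi^\omega)$ that almost surely still lies in $\mathcal{H}^s$ and in no smaller space. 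One then takes $\Sigma$ to be the collection of all $(\phi_k+\delta\Phi^\omega,\psi_k+\delta\Psi^\omega)$ with $k\in\N$, $\delta\in\Q_{>0}$, for a fixed $\omega$ in the full-measure set on which all the estimates below hold: density of $\Sigma$ is immediate, and $\Sigma\cap\mathcal{H}^{s'}(\T^3)=\emptyset$ because the smooth $\phi_k,\psi_k$ cannot cancel the roughness of $\Phi^\omega,\Psi^\omega$.

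The crucial probabilistic input is that the free evolution of the randomized data --- write $S(t)(\Phi^\omega,\Psi^\omega)$ for the solution of the \emph{linear} wave equation --- obeys Strichartz and energy-type bounds with a gain of $s_c-s$ derivatives over the deterministic theory: unit-scale randomization decouples frequencies, so Khinchin/large-deviation inequalities permit summation in $\ell^\infty$ rather than $\ell^2$ over dyadic blocks, yielding almost surely $S(t)(\Phi^\omega,\Psi^\omega)\in L^p_tL^q_x$ for exponents otherwise forbidden at regularity $s$. Importantly, these random bounds are stable under regularization: if $(f,g)\in\Sigma$ then $(\rho_n*f,\rho_n*g)$ is the mollification of genuinely randomized data, and the random Strichartz norms of $S(t)(\rho_n*f,\rho_n*g)$ are bounded uniformly in $n$ and converge to those of $S(t)(f,g)$ as $n\to\infty$ --- this is exactly where building $\Sigma$ out of randomized functions is used.

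Next I would split the smooth solution $u_n$ with data $(f_n,g_n)=(\rho_n*f,\rho_n*g)$ as $u_n=u_n^{\mathrm{lin}}+v_n$, where $u_n^{\mathrm{lin}}=S(t)(f_n,g_n)$ and $v_n$ solves the forced equation
\begin{equation*}
\partial_t^2v_n-\Delta v_n=-\big|u_n^{\mathrm{lin}}+v_n\big|^{2\sigma}\big(u_n^{\mathrm{lin}}+v_n\big),\qquad (v_n,\partial_tv_n)\big|_{t=0}=(0,0).
\end{equation*}
The gain of the splitting is that $v_n$ starts at the energy regularity $\mathcal{H}^1(\T^3)$, where the nonlinearity is energy-subcritical (or, for $\sigma=2$, critical) thanks to $H^1(\T^3)\hookrightarrow L^6(\T^3)$, while the forcing terms built from powers of $u_n^{\mathrm{lin}}$ are absorbed using the improved random Strichartz norms paired with the deterministic Strichartz estimates for $v_n$. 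A contraction argument then produces $v_n$ on a time interval whose length depends only on the uniformly bounded random norms, hence uniformly in $n$, and local Lipschitz dependence of the fixed point gives $v_n\to v$ in $C_t\mathcal{H}^1$ there; meanwhile $u_n^{\mathrm{lin}}\to S(t)(f,g)$ in $C_t\mathcal{H}^s$ since $(f_n,g_n)\to(f,g)$ in $\mathcal{H}^s$ and $S(t)$ is unitary. Adding these and passing to the limit in the Duhamel formula gives the asserted convergence and the distributional equation, at least on the short interval.

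The hard part will be the globalization --- upgrading the short-time interval to an arbitrary $[-T,T]$ uniformly in $n$. The local existence time for $v_n$ a priori degrades as $\|v_n(t)\|_{\mathcal{H}^1}$ grows, so one must show this norm stays bounded on $[-T,T]$ by a constant independent of $n$. I would do this by differentiating the energy $H[v_n](t)$ (or a modified energy correcting for the $u_n^{\mathrm{lin}}$ contributions), bounding $\tfrac{d}{dt}H[v_n]$ by a sum of products of powers of $\|v_n\|_{\mathcal{H}^1}$ with random Strichartz norms of $u_n^{\mathrm{lin}}$ that are integrable in $t$, and closing a Gronwall inequality --- the Burq--Tzvetkov energy argument / Bourgain's high--low trick. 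For $1\le\sigma<2$ this is comparatively soft; the energy-critical endpoint $\sigma=2$ is the genuinely delicate case, requiring the global perturbed Strichartz theory and the concentration-compactness-type arguments of \cite{OhPo},\cite{SXia} adapted to the random forcing. Once the uniform global $\mathcal{H}^1$ bound on $v_n$ is in hand, the short-time convergence argument iterates to cover all of $[-T,T]$, which finishes the proof.
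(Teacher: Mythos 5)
The paper gives no proof of Theorem~\ref{thmproba}: it is imported from \cite{BTz14}, \cite{OhPo}, \cite{SXia}, with the remark that the argument of \cite{Tzbook} (written for $\sigma=1$) adapts to $\sigma\in[1,2]$. Your sketch --- randomized data with a.s.\ improved Strichartz bounds that are stable under convolution (which commutes with the flow on $\T^3$), the decomposition $u_n=S(t)(f_n,g_n)+v_n$ with $v_n$ handled at $\mathcal{H}^1$ regularity, and globalization via the probabilistic energy estimate (with perturbative/critical theory only at $\sigma=2$) --- is precisely that standard Burq--Tzvetkov scheme, so it is essentially the same approach as the one the paper relies on.
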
  
When $1\leq\sigma<2$, the above theorem can be extended to $s=1-\frac{1}{\sigma}$, thanks to \cite{BTz14} (the case $\sigma=1$) and a recent result \cite{La18}(the case $1<\sigma<2$). 

In Theorem~\ref{thmproba} the set  $\Sigma$ is a full measure set with respect to a suitable non degenerate probability measure $\mu$ on the Sobolev space $\mathcal{H}^s(\T^3)$ such that $\mu(\mathcal{H}^{s'}(\T^3))=0$  for every  $s'>s$ .  One proves more than Theorem~\ref{thmproba} in  \cite{BTz14},\cite{OhPo},\cite{SXia} but the statement of Theorem~\ref{thmproba} is the suitable one for our purpose here. 

Theorem~\ref{thmproba} is inspired by the seminal contribution of Bourgain \cite{Bo}. There are however several new features with respect to \cite{Bo}. The first one is that more general randomisations compared to \cite{Bo} are allowed.  This led to results similar to Theorem~\ref{thmproba} in the context of a non compact spatial domains (see e.g. \cite{BOP}, \cite{LM}).  Next, the argument allowing to pass from local to global solutions in Theorem~\ref{thmproba} is  very different from \cite{Bo}. It is based on a probabilistic energy estimate introduced in \cite{BTz14} (see also \cite{CO}) while the argument giving the globalisation of the local solutions in \cite{Bo} is restricted to a very particular distribution of the initial data.  Finally, Theorem~\ref{thmproba} deals with functions of positive Sobolev regularity which avoids a renormalization of the equation, making the results more natural from a purely PDE perspective.  

Strictly speaking, the result of  Theorem~\ref{thmproba} is not stated as such in \cite{BTz14},\cite{OhPo},\cite{SXia}. One may however adapt the argument presented in \cite{Tzbook} which proves Theorem~\ref{thmproba} for $\sigma=1$ to the case of $\sigma\in [1,2]$. 

The regularization by convolution used in Theorem~\ref{thmproba} is essential. We refer to \cite{Tzbook,Xthese} for results showing that other regularizations of $(f,g)\in\Sigma$ may give divergent sequences of smooth solutions.  

The main result of this paper is that even if we naturally regularize the data by convolution, 
there is a dense set of (pathological) initial data giving not converging smooth solutions.  
This is in some sense a complementary to  Theorem~\ref{thmproba} result.

In order to state our result, we fix a bump function $\rho\in C_c^{\infty}(\R^3)$ such that 
$$
0\leq \rho(x)\leq 1,\quad \rho|_{|x|>\frac{1}{100}}\equiv 0,\quad \int_{\R^3}\rho(x)dx=1.
$$
For any $\epsilon>0$, we define $\rho_{\epsilon}(x):=\epsilon^{-3}\rho(x/\epsilon)$. With this notation, we have the following statement. 
\begin{thm}\label{thm:main}
Let $\frac{1}{2}\leq\sigma\leq 2$ and $\max\{0,\frac{3}{2}-\frac{2}{2\sigma-1} \}< s<s_c=\frac{3}{2}-\frac{1}{\sigma}$. 
There exists a dense set $S\subset \mathcal{H}^s(\T^3)$, such that for every $(f,g)\in S$, the family of global smooth solutions  $(u^{\epsilon})_{t>0}$ of \eqref{wave:main} with initial data $(\rho_{\epsilon}\ast f,  \rho_{\epsilon}\ast g)$ does not converge. More precisely,
\begin{align}\label{limsup}
 \limsup_{\epsilon\rightarrow 0}\|u^{\epsilon}(t)\|_{L^{\infty}([0,1];H^s(\T^3))}=+\infty.
\end{align}
\end{thm}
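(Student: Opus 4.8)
The plan is to construct the pathological set $S$ as a countable union of "perturbed" data, following the strategy that produces dense pathological sets in norm-inflation problems (as in \cite{CCTao}, \cite{Xthese}) but now forcing the divergence to occur \emph{along the convolution regularization} rather than along an arbitrary one. The starting point is a single-frequency instability mechanism: one first shows that for a suitable choice of a high-frequency profile $\varphi_N$ (a bump of amplitude $\sim N^{-s}$ modulated at frequency $N$, so that $\|\varphi_N\|_{H^s}\sim 1$ but spatially localized in a ball of radius $\sim 1/N$ or larger), the smooth solution of \eqref{wave:main} with data $(\rho_\epsilon * \varphi_N, 0)$ — for $\epsilon$ in a window adapted to $N$ — exhibits norm inflation: $\|u^\epsilon(t)\|_{H^s}$ becomes $\gg 1$ at some short time $t = t_N \to 0$. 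The condition $s > \tfrac32 - \tfrac{2}{2\sigma-1}$ should be exactly what makes the nonlinear term $|u|^{2\sigma}u$ dominate the dispersion on the relevant time-space scales; the condition $s < s_c$ guarantees we are genuinely supercritical so that the amplitude-frequency balance works. The key subtlety is that convolution with $\rho_\epsilon$ does not destroy this mechanism: as long as $\epsilon N \lesssim 1$, $\rho_\epsilon * \varphi_N$ is comparable to $\varphi_N$, and the ODE-type blow-up argument (freeze the spatial variable, compare with the solution of $\ddot v + |v|^{2\sigma} v = 0$) goes through. One must track uniformity in $\epsilon$ carefully so that the inflation happens for \emph{all} small $\epsilon$ in an interval, or at least along a sequence $\epsilon_k \to 0$, which is what \eqref{limsup} requires.

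Next I would promote this to a dense set by a superposition-and-translation argument. Fix a countable dense set $\{(f_j,g_j)\}_{j\in\N}$ of $\mathcal H^s(\T^3)$ that is smooth (hence for which the conclusion of Theorem~\ref{thmproba}-type stability holds trivially since the data is already regular). For each $j$ and each $m\in\N$, pick $N = N_{j,m}$ very large and a spatial bump location $x_{j,m}$, and set
\begin{align}\label{Sdef}
(F_{j,m}, G_{j,m}) := (f_j, g_j) + \big(\tau_{x_{j,m}}\varphi_{N_{j,m}},\, 0\big),
\end{align}
where $\tau_x$ denotes spatial translation and the amplitude of $\varphi_{N_{j,m}}$ is tuned so that $\|\tau_{x_{j,m}}\varphi_{N_{j,m}}\|_{\mathcal H^s} \le 1/m$. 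Then $(F_{j,m},G_{j,m}) \to (f_j,g_j)$ as $m\to\infty$, so $S := \{(F_{j,m},G_{j,m}) : j,m\in\N\}$ is dense. The point is that when we solve \eqref{wave:main} with data $\rho_\epsilon * (F_{j,m}, G_{j,m})$, for $\epsilon$ in the window adapted to $N_{j,m}$, the smooth low-frequency part $\rho_\epsilon * (f_j,g_j)$ produces a solution that stays bounded in $H^s$ on $[0,1]$ (uniformly in $\epsilon$, by standard well-posedness for smooth data plus energy conservation since $\sigma\le 2$), while the high-frequency bump $\rho_\epsilon * \tau_{x_{j,m}}\varphi_{N_{j,m}}$, localized in a tiny ball far from where it could interact destructively, drives the $H^s$ norm of the \emph{total} solution to be $\gg 1$ on a short time interval. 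Here one uses finite speed of propagation to decouple the two pieces on the relevant time scale $t_{N_{j,m}}\to 0$: inside the small ball the solution is governed by the bump and undergoes inflation, and $H^s$ is (after localization via Littlewood--Paley) sensitive to what happens there.

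The main obstacle — and the step requiring the most care — is controlling the nonlinear interaction between the low-frequency background $(f_j,g_j)$ and the high-frequency perturbation, and making the inflation estimate \emph{genuinely uniform in $\epsilon$}. Naively one wants $\rho_\epsilon*\varphi_N \approx \varphi_N$, but $\rho_\epsilon$ smooths at scale $\epsilon$, so if $\epsilon \gtrsim 1/N$ the bump is destroyed; thus the whole argument only sees a window $\epsilon \ll 1/N$, and one must check the ODE blow-up time $t_N$ is still reached within that window and before dispersion (which acts on time scale $\sim 1/N$ for frequency-$N$ data) can regularize. This is precisely where the lower bound on $s$ enters: it should force $t_N \ll 1/N$, i.e. the nonlinear time scale beats the dispersive time scale, for the chosen amplitude. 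I would carry this out by the standard device of comparing $u^\epsilon$ with the solution $w^\epsilon$ of the \emph{spatially frozen} ODE $\partial_t^2 w + |w|^{2\sigma}w = 0$ with the same initial data, estimating the error $u^\epsilon - w^\epsilon$ in $H^s$ via Duhamel and Gronwall over $[0,t_N]$, and showing the error is $o(1)$ times the main term. Finally, to get \eqref{limsup} rather than just divergence at one $\epsilon$, I would arrange the windows $\{\epsilon : \text{inflation for } N_{j,m}\}$ to accumulate at $0$ — choosing $N_{j,m}\to\infty$ along a subsequence suffices — so that the $\limsup$ over $\epsilon\to 0$ picks up the inflating values.
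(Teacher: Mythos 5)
Your overall mechanism (concentrating bump, comparison with the spatially frozen ODE $\partial_t^2 w+|w|^{2\sigma}w=0$ via an energy/Gronwall estimate on $[0,t_N]$, finite speed of propagation to decouple the bump from the smooth background) is the right one and matches the paper. But there is a genuine gap in your construction of the dense set $S$, and it sits exactly at the main difficulty of the theorem. Each of your data $(F_{j,m},G_{j,m})=(f_j,g_j)+(\tau_{x_{j,m}}\varphi_{N_{j,m}},0)$ contains a \emph{single} bump at a \emph{single, fixed} frequency $N_{j,m}$; such a datum is a fixed $C^\infty$ function. For smooth data the map $\epsilon\mapsto u^{\epsilon}$ converges (by continuous dependence in, say, $\mathcal H^2$) to the global smooth solution with data $(F_{j,m},G_{j,m})$, so $\limsup_{\epsilon\to0}\|u^{\epsilon}\|_{L^\infty([0,1];H^s)}$ is \emph{finite} for every element of your $S$. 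The inflation you produce is a fixed large constant $\sim\kappa_N(\lambda_Nt_N)^s$ attained in one window of $\epsilon$ adapted to $N_{j,m}$; letting $N_{j,m}\to\infty$ only makes \emph{different} elements of $S$ inflate by larger and larger amounts, it does not make the $\limsup$ in \eqref{limsup} infinite for any \emph{fixed} element. Your closing remark about "arranging the windows to accumulate at $0$" cannot be implemented within your $S$ because no single datum carries infinitely many active scales.

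The paper's fix, which you need, is to superpose \emph{infinitely many} bumps into one datum: $v_0=\sum_{k\ge k_1}v_{0,k}$ with $v_{0,k}$ a translate of the profile at frequency $n_k=\mathrm e^{\mathrm e^k}$, centered at $z^k=(1/k,0)$, with logarithmically decaying amplitudes $\kappa_{n_k}=\mathrm e^{-k\delta_1}$ so that $\sum_k\|v_{0,k}\|_{H^s}<\infty$ (this gives density of $C^\infty\times C^\infty+\{(\sum_{k\ge k_1}v_{0,k},0)\}$ as $k_1\to\infty$). Then for the fixed datum $(u_0,u_1)+(v_0,0)$ one runs your single-bump analysis at \emph{each} scale $\epsilon_{n_k}=\frac1{100n_k}\to0$: since the supports of the $\rho_{\epsilon_{n_k}}\ast v_{0,k'}$, $k'\neq k$, stay at distance $\gtrsim k^{-2}\gg t_{n_k}$ from the ball $B_k$ around $z^k$, finite propagation speed lets you replace the full solution by the one with only the $k$-th bump on $B(z^k,r_k/2)$ for $t\le t_{n_k}$, and a localization lemma (multiplication by a cutoff at scale $r_k=k^{-3}$ costs only $r_k^{-s}\sim(\log\log n_k)^{3s}$, which is beaten by the inflation $(\log n_k)^{s\sigma(\delta_2-\delta_1)-\delta_1}$) transfers the lower bound to $\|u^{\epsilon_{n_k}}(t_{n_k})\|_{H^s}$. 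Without this infinite superposition at separated locations and lacunary frequencies, the statement as formulated is not reached. A second, smaller point: you should also verify quantitatively that the convolution at scale $\epsilon_n\sim1/n$ does not destroy the $H^s$ \emph{lower} bound on the ODE profile at time $t_n$; this is not automatic from $\rho_\epsilon\ast\varphi_N\approx\varphi_N$ and in the paper requires a co-area/foliation argument (Lemma \ref{lowerboundgeometric}) applied to $\varphi\ast T_{100}\rho$.
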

\begin{rem}
	Furthermore, when $\sigma\geq 1$ (including the cubic nonlinearity), we are able to show that
\begin{align}\label{lim}
\lim_{\epsilon\rightarrow 0}\|u^{\epsilon}(t)\|_{L^{\infty}([0,1];H^s)(\T^3)}=+\infty.
\end{align}
See Remark \ref{rq}.
\end{rem}

It turns out that as a consequence of Theorem \ref{thm:main}, the pathological set
\begin{align*} \mathcal{P}:=\{(f,g)\in\mathcal{H}^s(\T^3):\,\,&\text{the solution } u^{\epsilon}(t) \text{ of \eqref{wave:main} with initial data } \rho_{\epsilon}\ast (f,g),\\ &
\text{satisfies the property }\quad
\limsup_{\epsilon\rightarrow 0} \|u^{\epsilon}(t)\|_{L^{\infty}([0,1];H^s(\T^3))}=+\infty \;  \}
\end{align*}
		contains a dense $G_{\delta}$ set:
\begin{cor}\label{Gdelta}
Under the same condition as Theorem \ref{thm:main}, the pathological data set $\mathcal{P}$ of \eqref{wave:main} such that \eqref{limsup} holds contains a dense $G_{\delta}$ subset of $\mathcal{H}^s(\T^3)$.
\end{cor}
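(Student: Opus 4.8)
The plan is to deduce Corollary~\ref{Gdelta} from Theorem~\ref{thm:main} by a Baire category argument, the only non-formal ingredient being the continuous dependence on the initial data of the (global, smooth) flow of \eqref{wave:main} at a \emph{fixed} regularization scale $\epsilon>0$. For $N\in\N$ I would set
$$
A_N:=\Big\{(f,g)\in\mathcal{H}^s(\T^3):\ \exists\,\epsilon\in(0,\tfrac1N)\ \text{with}\ \|u^\epsilon(t)\|_{L^\infty([0,1];H^s(\T^3))}>N\Big\},
$$
where $u^\epsilon$ is the solution of \eqref{wave:main} with data $(\rho_\epsilon\ast f,\rho_\epsilon\ast g)$, and first check the inclusion $\bigcap_{N\in\N}A_N\subset\mathcal{P}$: if $(f,g)\in A_N$ for every $N$, choose $\epsilon_N<1/N$ with $\|u^{\epsilon_N}(t)\|_{L^\infty([0,1];H^s)}>N$; then $\epsilon_N\to0$ while the norms tend to $+\infty$, so $\limsup_{\epsilon\to0}\|u^\epsilon(t)\|_{L^\infty([0,1];H^s)}=+\infty$ and $(f,g)\in\mathcal{P}$. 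It therefore suffices to prove that each $A_N$ is open and dense, for then $\bigcap_N A_N$ is a dense $G_\delta$ subset of the complete metric space $\mathcal{H}^s(\T^3)$ by the Baire category theorem.

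Density is immediate from Theorem~\ref{thm:main}: the set $S$ provided there is dense, and each $(f,g)\in S$ satisfies $\limsup_{\epsilon\to0}\|u^\epsilon(t)\|_{L^\infty([0,1];H^s)}=+\infty$, hence (taking the defining threshold $N$ and scale $1/N$) belongs to $A_N$. Thus $S\subset A_N$ and $A_N$ is dense.

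For the openness of $A_N$, I would fix $\epsilon>0$ and show that the functional $F_\epsilon\colon(f,g)\mapsto\|u^\epsilon(t)\|_{L^\infty([0,1];H^s(\T^3))}$ is continuous on $\mathcal{H}^s(\T^3)$; granting this, $\{F_\epsilon>N\}$ is open and $A_N=\bigcup_{0<\epsilon<1/N}\{F_\epsilon>N\}$ is open as a union of open sets. The continuity of $F_\epsilon$ factors through three continuous maps. First, for fixed $\epsilon>0$ the regularization $(f,g)\mapsto(\rho_\epsilon\ast f,\rho_\epsilon\ast g)$ is a bounded linear map from $\mathcal{H}^s(\T^3)$ into $\mathcal{H}^1(\T^3)$ (in fact into $C^\infty(\T^3)\times C^\infty(\T^3)$, since the Fourier multiplier $\widehat{\rho_\epsilon}(k)=\widehat\rho(\epsilon k)$ decays faster than any polynomial in $|k|$), hence continuous. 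Second, since $\frac12\le\sigma\le2$ the equation \eqref{wave:main} is energy-subcritical (or energy-critical when $\sigma=2$), so by the global well-posedness theory in the energy space (\cite{Gri},\cite{ShSt}) applied to these smoothed data, the data-to-solution map is continuous from $\mathcal{H}^1(\T^3)$ into $C([0,1];\mathcal{H}^1(\T^3))$ and the solutions are global and smooth, consistently with the framework of Theorem~\ref{thm:main}. Third, the embedding $H^1(\T^3)\hookrightarrow H^s(\T^3)$ (valid since $s<s_c\le1$) and the continuity of $v\mapsto\|v\|_{L^\infty([0,1];H^s)}$ on $C([0,1];H^1)$ complete the composition.

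The one point that deserves care is the continuous dependence of the energy-space flow in the range $\frac12\le\sigma<1$, where the nonlinearity $|u|^{2\sigma}u$ is only $C^1$ and the Lipschitz bounds available in the smooth regime do not directly apply; nevertheless continuous dependence still holds in $\mathcal{H}^1(\T^3)$ by the standard Strichartz fixed-point scheme together with a Bona--Smith type regularization of the data, so no genuine obstacle arises. (Alternatively, since the data handled by $F_\epsilon$ are smooth, one may run the argument in $\mathcal{H}^{s'}(\T^3)$ with $s'$ large, where the flow map is locally Lipschitz, and conclude by persistence of regularity and conservation of energy.) This is the only step where the analytic input beyond Theorem~\ref{thm:main} is used; everything else is the soft Baire scheme described above.
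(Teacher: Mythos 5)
Your proposal is correct and follows the same overall Baire scheme as the paper: exhaust the pathological behaviour by countably many ``norm exceeds $N$ at some small regularization scale'' sets, get density from the set $S$ of Theorem \ref{thm:main}, and get openness from continuity of the solution map at a \emph{fixed} mollification parameter. The differences are in bookkeeping and in how the continuity step is implemented. The paper restricts to the sequence $\epsilon_{n_k}$ produced in the proof of Theorem \ref{thm:main} and writes the relevant set as $\bigcap_{N}\bigcap_{k_0}\bigcup_{k\geq k_0}\mathcal{O}_{N,k}$, whereas you take a union over all $\epsilon<1/N$; both are fine since openness is only needed for each fixed scale. More substantively, your primary continuity argument invokes continuous dependence of the energy-space flow in $\mathcal{H}^1(\T^3)$, which is exactly where the delicate cases sit ($\sigma=2$ requires the critical stability theory, and $\tfrac12\leq\sigma<1$ needs the Strichartz/Bona--Smith discussion you sketch); the paper avoids this entirely and instead proves a quantitative Lipschitz bound directly in the $\mathcal{H}^s$ topology of the \emph{unregularized} data: since $\|\rho_{\epsilon_{n_k}}\ast(f,g)\|_{\mathcal{H}^2(\T^3)}\leq C\epsilon_{n_k}^{-(2-s)}\|(f,g)\|_{\mathcal{H}^s(\T^3)}$, the Sobolev embedding $H^2(\T^3)\hookrightarrow L^{\infty}(\T^3)$ together with global well-posedness in $\mathcal{H}^2$ and a Gronwall estimate for the difference of two solutions yields
\begin{equation*}
\sup_{t\in[0,1]}\|\Phi(t)(\rho_{\epsilon_{n_k}}\ast(f_0,g_0))-\Phi(t)(\rho_{\epsilon_{n_k}}\ast(f,g))\|_{H^1(\T^3)}\lesssim \epsilon_{n_k}^{-(2-s)(2\sigma+1)}\|(f-f_0,g-g_0)\|_{\mathcal{H}^s(\T^3)},
\end{equation*}
which gives openness of each $\mathcal{O}_{N,k}$ in one stroke, uniformly over the whole range $\tfrac12\leq\sigma\leq2$ and with no appeal to refined energy-space well-posedness; note this is essentially your parenthetical alternative (run the comparison at high regularity, where only the pointwise bound $|f(u)-f(v)|\lesssim(|u|^{2\sigma}+|v|^{2\sigma})|u-v|$ and an $L^\infty$ bound on the two solutions are needed). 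So your argument buys nothing extra and carries avoidable risk in the endpoint cases, while the paper's version is more elementary and explicit; your fallback route closes that gap and makes the proof equivalent to the paper's.
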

Consequently, by the Baire category theorem, the good data set $\Sigma$ in Theorem \ref{thmproba} cannot be $G_{\delta}$. On the other hand, the pathological set is negligible with respect to the measures used in the probabilistic well-posedness of \eqref{wave:main}. This shows that the topological and the measure theoretic notions of genericity are very different. For examples of $G_{\delta}$ dense sets giving solutions of Hamiltonian PDE's with growing Sobolev norms for large times, we refer to  \cite{GeG1},\cite{GeG2},\cite{Ha}, while in Corollary \ref{Gdelta}, the Sobolev norms are growing in very short times, depending on the frequency localization of the initial data.

The main ingredient of the proof of Theorem \ref{thm:main} is a refined version of the ill-posedness construction in \cite{BTz08} (see also \cite{STz19}) which uses an idea of Lebeau \cite{Le01} exploiting the property of the finite propagation speed of the wave equation. It is an interesting problem to extend the result of Theorem~\ref{thm:main} to the case of the nonlinear Schr\"odinger equation. Such a result would be a significant extension of  \cite{AC}.

The results of  Theorem~\ref{thmproba} and Theorem~\ref{thm:main}  show that for data of supercritical regularity two opposite behaviours  coexiste. Both behaviours are manifested on dense sets which makes that it would be probably interesting to try to observe these behaviours by numerical simulations. 
\subsection*{Acknowledgement}
The authors are supported by the ANR grant ODA (ANR-18-CE40-0020-01). The authors wish to thank Nicolas Burq and Patrick G\'erard for pointing out that our construction in the previous version of this article implies that the pathological data set contains a dense $G_{\delta}$ set. 
\section{Unstable profile}
\subsection{Explicit estimates for the ODE profile}
Let $V(t)$ be the unique solution of the following ODE:
\begin{equation}\label{ODE}
V''+|V|^{2\sigma}V=0,\quad V(0)=1,\; V'(0)=0.
\end{equation}
It can be shown that $V(t)$ is periodic (see Lemma 6.2 of \cite{STz19}). We choose the following parameters:
\begin{equation}\label{parameters}
 \kappa_n=(\log n)^{-\delta_1},\; \epsilon_n=\frac{1}{100n},\; t_n=\big((\log n)^{\delta_2}n^{-\big(\frac{3}{2}-s\big)}\big)^{\sigma},\; \lambda_n=(\kappa_n n^{\frac{3}{2}-s})^{\sigma},
\end{equation}
where $0<\delta_1<\delta_2<1$ and their precise values are to be chosen according to different context.

Take $\varphi\in C_c^{\infty}(|x|\leq 1)$, radial, $0\leq \varphi\leq 1$, and $\nabla\varphi\neq 0$ on $0<|x|<1$. Let
\begin{equation}\label{profile1}
v_n(0,x):=\kappa_n n^{\frac{3}{2}-s}\varphi(nx),\quad v_n^{\epsilon}(0):=\rho_{\epsilon}\ast v_n(0).
\end{equation} 
Define
\begin{equation}\label{profile2}
v_n^{\epsilon}(t,x)=v_n^{\epsilon}(0,x)V(t(v_n^{\epsilon}(0,x))^{\sigma}). 
\end{equation}
Then one verifies that $v_n^{\epsilon}$ solves
\begin{equation}\label{ODE1}
\partial_t^2v_n^{\epsilon}+|v_n^{\epsilon}|^{2\sigma}v_n^{\epsilon}=0,\quad  (v_n^{\epsilon},\partial_tv_n^{\epsilon})|_{t=0}=(v_n^{\epsilon}(0), 0).
\end{equation}
\begin{lem}\label{profilebound}
Let $0\leq s<s_c$, then for parameters defined in \eqref{parameters},
\begin{enumerate}
	\item $\|v_n^{\epsilon_n}(t_n)\|_{H^s(\T^3)}\gtrsim \kappa_n(\lambda_n t_n)^s$.
	\item $\|v_n^{\epsilon_n}(t)\|_{H^{k}(\T^3)}\lesssim \kappa_n(\lambda_n t_n)^{k}n^{k-s}$, for $k=0,1,2,3,\cdots$ and $t\in[0,t_n]$.
	\item $\|v_n^{\epsilon_n}(t)\|_{L^{\infty}(\T^3)}\lesssim \lambda_n^{\frac{1}{\sigma}}$.
	\item $\|\partial^{\alpha}v_n^{\epsilon_n}(t)\|_{L^{\infty}(\T^3)}\lesssim \lambda_n^{\frac{1}{\sigma}}n^{|\alpha|}(1+\lambda_nt),$ for $\alpha\in\N^3, |\alpha|=1$ and $t\in[0,t_n]$.
\end{enumerate}	

\end{lem}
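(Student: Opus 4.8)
The plan is to work throughout with the explicit formula
$$ v_n^{\epsilon_n}(t,x)=\kappa_n n^{\frac32-s}\,\Phi(nx)\,V\!\big(t\lambda_n\Phi(nx)^\sigma\big),\qquad \lambda_n^{1/\sigma}=\kappa_n n^{\frac32-s}, $$
where $\Phi:=\varphi\ast\rho_{1/100}\in C_c^\infty(\R^3)$, $\rho_{1/100}(x):=100^3\rho(100x)$. Indeed $v_n^{\epsilon_n}(0)=\rho_{\epsilon_n}\ast\big(\kappa_n n^{\frac32-s}\varphi(n\cdot)\big)$ together with $\epsilon_n=\frac1{100n}$ gives $v_n^{\epsilon_n}(0,x)=\kappa_n n^{\frac32-s}\Phi(nx)$ by comparing Fourier transforms, and then \eqref{profile2} with $\lambda_n=(\kappa_n n^{\frac32-s})^\sigma$ produces the argument $t\lambda_n\Phi(nx)^\sigma$. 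I record the following consequences of the hypotheses on $\varphi,\rho$ — choosing in addition $\varphi>0$ on $\{|x|<1\}$, which is compatible with them: $\Phi$ is radial with $0\le\Phi\le1$, is strictly positive on the interior of its support and vanishes to infinite order on its boundary, so that $\Phi^\sigma\in C^\infty(\R^3)$; and, since $\nabla\varphi\ne0$ on $\{0<|x|<1\}$, a short computation gives $\Phi\asymp1$ and $|\nabla\Phi|\asymp1$ on the fixed annulus $\mathcal A:=\{\frac12<|x|<\frac34\}$. Finally $V,V',V'',\dots$ are bounded (periodicity of $V$), the function $v_n^{\epsilon_n}(t,\cdot)$ is supported in a ball of radius $\lesssim 1/n$, and $\mu_n:=\lambda_n t_n=(\log n)^{\sigma(\delta_2-\delta_1)}\to+\infty$ since $\delta_1<\delta_2$.

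Granting this, (3) is immediate: $|v_n^{\epsilon_n}(t,x)|\le\|V\|_{L^\infty}\,\kappa_n n^{\frac32-s}\|\Phi\|_{L^\infty}\lesssim\lambda_n^{1/\sigma}$. For (4) with $|\alpha|=1$, set $a_n(x):=\kappa_n n^{\frac32-s}\Phi(nx)$; differentiating the product one finds $\partial_j v_n^{\epsilon_n}(t,x)=\partial_j a_n(x)\,h\big(ta_n(x)^\sigma\big)$ with $h(\tau):=V(\tau)+\sigma\tau V'(\tau)$, and one uses $|\partial_j a_n|\lesssim\lambda_n^{1/\sigma}n$ together with $|h(ta_n^\sigma)|\le\|V\|_{L^\infty}+\sigma t\lambda_n\|V'\|_{L^\infty}\lesssim1+\lambda_n t$, since $a_n^\sigma=\lambda_n\Phi(n\cdot)^\sigma\le\lambda_n$. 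For (2), the Leibniz rule together with the Fa\`{a} di Bruno formula applied to $\partial^{\beta-\gamma}\big(V(t\lambda_n\Phi(n\cdot)^\sigma)\big)$ gives, for any multi-index $\beta$ and $t\in[0,t_n]$,
$$ \|\partial^\beta v_n^{\epsilon_n}(t)\|_{L^2}\le\sum_{\gamma\le\beta}\binom{\beta}{\gamma}\|\partial^\gamma a_n\|_{L^2}\,\big\|\partial^{\beta-\gamma}\big(V(t\lambda_n\Phi(n\cdot)^\sigma)\big)\big\|_{L^\infty}\lesssim\kappa_n n^{|\beta|-s}(1+t\lambda_n)^{|\beta|}, $$
because $\|\partial^\gamma a_n\|_{L^2}\lesssim\kappa_n n^{|\gamma|-s}$, every derivative hitting the argument $t\lambda_n\Phi(n\cdot)^\sigma$ costs $n\cdot t\lambda_n$ (this uses $\Phi^\sigma\in C^\infty$), and every $V^{(m)}$ factor is bounded. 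Since $t\le t_n$ forces $1+t\lambda_n\le1+\mu_n\lesssim\lambda_n t_n$, summing over $|\beta|\le k$ yields (2).

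The heart of the matter is the lower bound (1), which I would obtain by combining an interpolation inequality with a direct lower bound on $\|\nabla v_n^{\epsilon_n}(t_n)\|_{L^2}$. From $\nabla v_n^{\epsilon_n}(t_n,x)=\nabla a_n(x)\,h\big(\mu_n\Phi(nx)^\sigma\big)$, the bound $|\nabla a_n(x)|\asymp\kappa_n n^{\frac52-s}$ for $nx\in\mathcal A$, and passing to polar coordinates with the substitution $\tau=\mu_n\Phi(r)^\sigma$ (legitimate as $\Phi^\sigma$ is strictly monotone with nonvanishing derivative on $\mathcal A$), the estimate reduces to the elementary oscillation identity
$$ \int_0^T h(\tau)^2\,d\tau=\frac{\sigma^2}{3}\Big(\frac1P\!\int_0^P V'(\tau)^2\,d\tau\Big)T^3+O(T^2)\gtrsim T^3\qquad(T\to+\infty), $$
valid because $V$ is $P$-periodic and non-constant, so $\frac1P\int_0^P(V')^2>0$. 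This gives $\|\nabla v_n^{\epsilon_n}(t_n)\|_{L^2}\gtrsim\kappa_n n^{1-s}\lambda_n t_n$, while (2) with $k=2$ gives $\|v_n^{\epsilon_n}(t_n)\|_{H^2}\lesssim\kappa_n n^{2-s}(\lambda_n t_n)^2$. Inserting both into the interpolation inequality $\|f\|_{H^1}\le\|f\|_{H^s}^{\theta}\|f\|_{H^2}^{1-\theta}$ with $\theta:=\frac1{2-s}\in(0,1)$ (so that $1=\theta s+2(1-\theta)$; legitimate since $0\le s<s_c\le1$), the powers of $n$ cancel exactly and one is left with $\|v_n^{\epsilon_n}(t_n)\|_{H^s}\gtrsim\kappa_n(\lambda_n t_n)^s$, which is (1). (When $s=0$ the claim reads $\|v_n^{\epsilon_n}(t_n)\|_{L^2}\gtrsim\kappa_n$, which follows directly from the same equidistribution applied to $V^2$.)

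I expect the main obstacle to be making the gradient lower bound rigorous: one must verify that the fast oscillation $x\mapsto h(\mu_n\Phi(nx)^\sigma)$ equidistributes and contributes the full factor $(\lambda_n t_n)^2$ without being spoilt by the regions near $\partial\mathcal A$ or by cancellation, and what makes this manageable is precisely the localization to the fixed annulus $\mathcal A$ on which $\Phi\asymp1$ and $\nabla\Phi$ is bounded away from $0$. An alternative to the interpolation step, which avoids invoking (2) inside the proof of (1), is to pair $v_n^{\epsilon_n}(t_n)$ in $\dot H^s$ against a WKB-type test function $\chi(nx)\Phi(nx)e^{ic\mu_n\Phi(nx)^\sigma}$ supported in $\mathcal A$ and tuned to a nonzero Fourier coefficient of $V$: non-stationary phase shows this test function has $\dot H^{-s}$ norm of the right size while the pairing stays bounded below, again giving (1). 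The remaining points — the Fourier-side scaling identity for $\Phi$, the comparison of $H^s(\T^3)$ and $\dot H^s(\R^3)$ norms for functions supported in small balls (needed only for the alternative argument), and the choice of $\varphi,\rho$ positive on the interiors of their supports ensuring $\Phi^\sigma\in C^\infty$ — I would dispatch quickly.
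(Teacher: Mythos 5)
Your overall route coincides with the paper's: items (2)--(4) follow by direct differentiation of the explicit formula $v_n^{\epsilon_n}(t,x)=a_n(x)V(t\,a_n(x)^\sigma)$, and item (1) follows by interpolating $\|\cdot\|_{H^1}\le\|\cdot\|_{H^s}^{1/(2-s)}\|\cdot\|_{H^2}^{(1-s)/(2-s)}$ against the $H^2$ upper bound from (2), so that everything reduces to the gradient lower bound \eqref{H1lower}. Where you differ is only in how that lower bound is obtained: you keep the full factor $h(\tau)=V(\tau)+\sigma\tau V'(\tau)$, localize to a fixed annulus where $\Phi\asymp1$ and $|\nabla\Phi|\asymp1$, and change variables to exploit $\int_0^T h^2\sim\frac{\sigma^2}{3}\,\overline{(V')^2}\,T^3$, whereas the paper isolates the dominant term $\sigma t_n v^\sigma\nabla v\,V'$ and invokes the general Lemma~\ref{lowerboundgeometric}, proved with the co-area formula. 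These are the same mechanism, and your period-averaging computation is correct ($V$ is non-constant since $V''(0)=-1$), so the argument goes through in substance.

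Two steps are, however, not justified as written. First, you assert that $\Phi=\varphi\ast\rho_{1/100}$ is radial; it need not be, since $\rho$ is a fixed but arbitrary bump function, not assumed radial, so the substitution $\tau=\mu_n\Phi(r)^\sigma$ in polar coordinates is not available. This is repairable either by the paper's device (co-area formula over level sets of $\Phi$, as in Lemma~\ref{lowerboundgeometric}) or by a ray-by-ray change of variables: for every unit vector $\omega$ and $\frac12\le r\le\frac34$ one has $\partial_r\big(\Phi(r\omega)\big)\le-c<0$, because all points $r\omega-y$ with $|y|\le\frac1{100}$ lie in a compact subannulus of $\{0<|x|<1\}$ where $\partial_r\varphi\le-c'$ and the radial directions are nearly aligned; moreover a fixed interval of values is attained on every ray segment since $\Phi(0.55\,\omega)\ge\varphi(0.56)>\varphi(0.69)\ge\Phi(0.7\,\omega)$ by the radial monotonicity of $\varphi$. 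Second, the implication ``$\Phi$ vanishes to infinite order at $\partial\operatorname{supp}\Phi$, hence $\Phi^\sigma\in C^\infty$'' is false in general for fractional $\sigma$ (flat functions with oscillation near the zero set give Glaeser-type counterexamples), and your Fa\`a di Bruno bound in (2) leans on it. For the derivatives actually used later ($k\le3$) and $\sigma\ge\frac12$ this is harmless: the potentially singular terms carry factors of the form $\Phi^{\sigma+1-j}|\nabla\Phi|^{j}$, $j\le k$, which are bounded via the elementary inequality $|\nabla\Phi|^2\lesssim\|\Phi\|_{C^2}\,\Phi$ valid for nonnegative $C^2$ functions; for arbitrary $k$, as the lemma is stated, the point deserves at least a remark (the paper's own ``dominant part'' computation is equally brief here). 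Minor points: $\varphi>0$ on $\{|x|<1\}$ is automatic from the stated hypotheses on $\varphi$, and your separate treatment of $s=0$ in the interpolation step is correct.
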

\begin{proof}
The proof follows from a direct calculation as in \cite{BTz08}, with an additional attention to the convolution. We denote by $T_{\lambda}$, the scaling operator $T_{\lambda}(f):=f(\lambda\cdot)$. Without loss of generality, we will do all the computation in $\R^3$ instead of $\T^3$, since all the functions involved are compactly supported near the origin.

By definition, for $\alpha\in\N^3, |\alpha|=k$,
$$ v_n^{\epsilon_n}(0,x)=\lambda_n^{\frac{1}{\sigma}}\int_{\R^3}\varphi(n(x-y))\frac{1}{\epsilon_n^3}\rho\big(\frac{y}{\epsilon_n}\big)dy,\quad \partial^{\alpha} v_n^{\epsilon_n}(0,x)=\lambda_n^{\frac{1}{\sigma}}n^k\int_{\R^d}T_n(\partial^{\alpha}\varphi)(x-y)\frac{1}{\epsilon_n^3}\rho\big(\frac{y}{\epsilon_n}\big)dy.
$$
Using Young's convolution inequality, we have from \eqref{profile2} that
$$ \|\partial^{\alpha}v_n^{\epsilon_n}(0)\|_{L^{\infty}}\lesssim \lambda_n^{\frac{1}{\sigma}}n^{|\alpha|},\quad \|\partial^{\alpha}v_n^{\epsilon_n}(0)\|_{L^2}\lesssim \kappa_n n^{|\alpha|-s},\quad  \|v_n^{\epsilon_n}(t)\|_{L^{\infty}}\lesssim \lambda_n^{\frac{1}{\sigma}},
$$
and
$$ \|v_n^{\epsilon_n}(t)\|_{L^2}\leq \|V\|_{L^{\infty}}\|v_n^{\epsilon_n}(0)\|_{L^2}\lesssim \kappa_n n^{-s}.
$$
This proves (2) and (3) for the case $k=0$. From direct calculation using \eqref{profile2},
\begin{equation}\label{calculation1}
\begin{split}
\nabla v_n^{\epsilon_n}(t,x)=&\sigma t(v_n^{\epsilon_n}(0,x))^{\sigma}\nabla v_n^{\epsilon_n}(0,x)V'\big(t(v_n^{\epsilon_n}(0,x) )^{\sigma} \big)+\nabla v_n^{\epsilon_n}(0,x)V\big(t(v_n^{\epsilon_n}(0,x))^{\sigma} \big).
\end{split}
\end{equation}
Thus
$ \|\nabla v_n^{\epsilon_n}(t)\|_{L^{\infty}}\lesssim (\lambda_n t+1)\lambda_n^{\frac{1}{\sigma}}n.
$
Note that $\lambda_nt_n=(\log n)^{\sigma(\delta_2-\delta_1)}\gg 1$, the dominant part in $\partial^{\alpha}v_n^{\epsilon_n}(t,x)$ comes from
$$ \big((v_n^{\epsilon_n}(0))^{\sigma-1}\nabla v_n^{\epsilon_n}(0)\big)^{|\alpha|}t^{|\alpha|}v_n^{\epsilon_n}(0)V^{(|\alpha|)}(\cdot),
$$
if we estimate $t$ by $t_n$, hence $\|v_n^{\epsilon_n}(t)\|_{H^k}\lesssim \kappa_n(\lambda_nt_n)^k n^{k-s}$, for all $k=0,1,2,\cdots$. This proves (2). 

The only non-trivial part is (1). Since $0<s<1$, from the  interpolation
$$ \|v_n^{\epsilon_n}(t)\|_{H^1}\lesssim \|v_n^{\epsilon_n}(t)\|_{H^s}^{\frac{1}{2-s}}\|v_n^{\epsilon_n}(t)\|_{H^2}^{\frac{1-s}{2-s}}
$$ 
and the upper bound of $\|v_n^{\epsilon_n}(t)\|_{H^2}$ that we have proved, it suffices to show that
\begin{equation}\label{H1lower}
\|v_n^{\epsilon_n}(t_n)\|_{H^1}\gtrsim \kappa_n(\lambda_nt_n)n^{1-s}.
\end{equation}
It is reduced to get a lower bound for the dominant part
\begin{equation}\label{dominant1}
\begin{split} &\big\|\sigma t_n\big(v_n^{\epsilon_n}(0,x)\big)^{\sigma}\nabla v_n^{\epsilon_n}(0,x)V'\big(t_n(v_n^{\epsilon_n}(0,x))^{\sigma} \big)\big\|_{L^2}\\
=&\sigma t_nn\lambda_n^{1+\frac{1}{\sigma}}\big\| \big[ (T_n(\nabla\varphi))\ast \rho_{\epsilon_n} \big] 
\big[ (T_n(\varphi))\ast \rho_{\epsilon_n} \big]^{\sigma}   V'\big(\lambda_n t_n ((T_n\varphi)\ast\rho_{\epsilon_n})^{\sigma} \big)\big\|_{L^2}
\end{split}
\end{equation}
Note that $(T_nf)\ast \rho_{\epsilon_n}(x)=\int f(nx-n\epsilon_n y)\rho(y)dy$, hence
$$ (\text{RHS. of }  \eqref{dominant1})\sim t_nn^{1-\frac{3}{2}}\lambda_n^{1+\frac{1}{\sigma}}
\big\|\nabla(\varphi\ast \widetilde{\rho})\cdot (\varphi\ast \widetilde{\rho} )^{\sigma}V'\big(\lambda_nt_n(n\epsilon_n)^{-3\sigma }(\varphi\ast \widetilde{\rho})^{\sigma}(x) \big) \big\|_{L^2},
$$
where $\widetilde{\rho}=T_{\frac{1}{n\epsilon_n}}\rho=T_{100}\rho$ and we used $n\epsilon_n=\frac{1}{100}$.
Note that $t_nn^{1-\frac{3}{2}}\lambda_n^{1+\frac{1}{\sigma}}=\lambda_n t_nn^{1-s}$, hence \eqref{H1lower} follows from the following lemma:
\begin{lem}\label{lowerboundgeometric}
Assume that $\psi\in C_c^{\infty}(\R^d)$ and $\psi(x)>0$ for all $|x|<1$. Assume that there exist two constants $0<a<b<1$, such that $d\psi\neq 0$ on $\{x:a\leq |x|\leq b\}$. Let $W$ be a non-trivial periodic function (i.e. $W\neq 0$). Then there exist $c_0>0, \lambda_0>0$, such that for all $\lambda\geq \lambda_0$,
$$ \big\|\nabla\psi(x)|\psi(x)|^{\sigma}W(\lambda\psi(x)) \big\|_{L^2(\R^d)}\geq c_0>0.
$$	
\end{lem}
\begin{proof}
We follow the geometric argument in \cite{STz19}. Denote by $\mathcal{C}_{a,b}:=\{x:a\leq |x|\leq b \}$. By shrinking $a,b$ if necessary, we may assume that $\psi(\mathcal{C}_{a,b})$ is foliated by $\Sigma_s:=\{x:\psi(x)=s \}$. From the hypothesis on $\psi$, there exist $0<c_1<C_1<\infty$, such that $c_1\leq |\nabla\psi|\leq C_1$ on $\mathcal{C}_{a,b}$. Let $B=\max_{\mathcal{C}_{a,b}}\psi$ and $A=\min_{\mathcal{C}_{a,b}}\psi$, then we have for $F(s)=|s|^{2\sigma}|W(\lambda s)|^2$ that
$$ \|\nabla\psi (F\circ\psi)^{1/2}\|_{L^2}^2\geq c_1^2\int_{\mathcal{C}_{a,b}}F(\psi(x))dx.
$$ 
By the co-area formula, 
$$ \int_{\mathcal{C}_{a,b}}F(\psi(x))dx=\int_A^BF(s)ds\int_{\Sigma_s}\frac{d\sigma_{\Sigma_s}}{|\nabla\psi|}\geq c'\int_A^B|s|^{2\sigma}|W(\lambda s)|^2ds,
$$
thanks to the fact that the mapping $s\mapsto \mathcal{M}^{d-1}(\Sigma_s)$ is continuous, where $\mathcal{M}^{d-1}$ is the surface measure on $\Sigma_s$. By changing variables, we obtain that
$$ \int_A^B|s|^{2\sigma}|W(\lambda s)|^2ds=\frac{1}{\lambda^{2\sigma+1}}\int_{\lambda A}^{\lambda B}|s|^{2\sigma}|W(s)|^2ds\geq C_{A,B}\frac{1}{\lambda(B-A)}\int_{\lambda A}^{\lambda B}|W(s)|^2ds\geq C_{A,B}',
$$
where the last constant does not depend on $\lambda$, if $\lambda$ is large enough. This completes the proof of Lemma~\ref{lowerboundgeometric}.
\end{proof}

The proof of Lemma \ref{profilebound} is now complete.
\end{proof}

\begin{rem}\label{rq}
When $\sigma\geq 1$, the statements of Lemma \ref{profilebound} hold for all $\epsilon\leq \epsilon_n^{2}$. 
Indeed, all the inequalities hold automatically for $\epsilon\leq\epsilon_n^2$, except for (1), the lower bound of $\|v_n^{\epsilon}(t_n)\|_{H^s(\T^3)}$. To get (1), it suffices to prove \eqref{H1lower} when replacing $\epsilon_n$ by $\epsilon\leq \epsilon_n^2$.
It is then reduced to show that
\begin{align}\label{reduceddifference} 
&\|(T_n\nabla\varphi) (T_n\varphi)^{\sigma}V'(\lambda_nt_n(T_n\varphi )^{\sigma})-
(T_n\nabla\varphi)\ast\rho_{\epsilon}\cdot (T_n\varphi\ast\rho_{\epsilon})^{\sigma}V'(\lambda_nt_n(T_n\varphi\ast\rho_{\epsilon} )^{\sigma}) \|_{L^2}\notag \\ 
\leq & o(n^{-\frac{3}{2}}), 
\end{align}
as $n\rightarrow\infty$. 
Note that $$\|T_n\nabla\ast \rho_{\epsilon}\|_{L^{\infty}}+\|T_n\varphi\ast\rho_{\epsilon}\|_{L^{\infty}}\lesssim 1, \; \|V^{(k)}(\lambda_n t_n(T_n\varphi\ast\varphi_{\epsilon} ))\|_{L^{\infty}}\lesssim 1, $$
for $k=1,2$, with constants independent of $\epsilon$. By taking the Fourier transform, for any Schwartz function $F$, uniformly in $\epsilon\leq \epsilon_n^2$, we have
$$ \|T_n F-T_nF\ast \rho_{\epsilon}\|_{L^2}\leq n^{-\frac{3}{2}}\|(\widehat{\rho}(n\epsilon\xi)-\widehat{\rho}(0))\widehat{F}(\xi)\|_{L^2}=o(n^{-\frac{3}{2}}),\; n\rightarrow\infty,
$$
thanks to the dominated convergence theorem. Together with the fact $\sigma\geq 1$ and the mean value theorem, we obtain \eqref{reduceddifference}.
The above argument, combed with slight modifications of the analysis below, allows us to prove \eqref{lim}.
\end{rem}

\subsection{Perturbative analysis}
Fix $(u_0,u_1)\in C^{\infty}(\T^3)\times C^{\infty}(\T^3)$, denote by $u_n^{\epsilon_n}$ the solution of
$$ \partial_t^2u_n^{\epsilon_n}-\Delta u_n^{\epsilon_n}+|u_n^{\epsilon_n}|^{2\sigma}u_n^{\epsilon_n}=0
$$
with the initial data $(u_n^{\epsilon_n}(0),\partial_tu_n^{\epsilon_n}(0))=\rho_{\epsilon_n}\ast\big((u_0,u_1)+(v_n(0),0) \big)$, where $v_n(0)$ is given by \eqref{profile1}. We denote  by 
$$ S(t)(f,g):=\cos(t\sqrt{-\Delta})f+\frac{\sin\sqrt{-\Delta}}{\sqrt{-\Delta}}g
$$
the propagator of the linear wave equation.
\begin{prop}\label{pertubation}
 Assume that $\max\big\{\frac{3}{2}-\frac{2}{2\sigma-1}, 0\big\}\leq s<s_c=\frac{3}{2}-\frac{1}{\sigma}$, then for any $0<\theta<\frac{\sigma}{2}\big(\frac{3}{2}-s\big)-\frac{1}{2}$ and $(u_0,u_1)\in C^{\infty}(\T^3)\times C^{\infty}(\T^3)$, there exist $C>0$, $\delta_2>0$, such that for any $\delta_1\in(0,\delta_2)$, we have
$$ \sup_{t\in[0,t_n]}\|u_n^{\epsilon_n}(t)-\rho_{\epsilon_n}\ast S(t)(u_0,u_1)-v_n^{\epsilon_n}(t) \|_{H^{\nu}(\T^3)}\leq Cn^{(\nu-s)-\theta},\forall \nu=0,1,2,
$$
where the function $v_n^{\epsilon_n}(t)$ is defined in \eqref{profile2} with parameters as  in \eqref{parameters}, and the constant $C$ only depends on the smooth data $(u_0,u_1)$ and $\theta>0$. Consequently, we have
$$ \sup_{t\in[0,t_n]}\|u_n^{\epsilon_n}(t)-\rho_{\epsilon_n}\ast S(t)(u_0,u_1)-v_n^{\epsilon_n}(t) \|_{H^{s}(\T^3)}\leq Cn^{-\theta}.
$$
In particular, for $\delta_1$ sufficiently small, 
$$ \|u_n^{\epsilon_n}(t_n)\|_{H^s(\T^3)}\gtrsim (\log n)^{s\sigma(\delta_2-\delta_1)-\delta_1}\rightarrow\infty,\text{ as }n\rightarrow \infty.
$$
\end{prop}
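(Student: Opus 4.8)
The plan is to carry out a perturbative analysis around the ODE profile $v_n^{\epsilon_n}$ of \eqref{profile2}, in the spirit of the ill-posedness construction of \cite{BTz08, STz19}, the one genuinely new ingredient being the finite speed of propagation. First I would replace the linear reference $\rho_{\epsilon_n}\ast S(t)(u_0,u_1)$ by the global smooth solution $U_n$ of \eqref{wave:main} with data $\rho_{\epsilon_n}\ast(u_0,u_1)$: since $(u_0,u_1)$ is fixed and smooth, $U_n$ is bounded in every $H^k(\T^3)$ uniformly in $n$ and $t\in[0,1]$, and $U_n-\rho_{\epsilon_n}\ast S(t)(u_0,u_1)$ solves a wave equation with zero data and forcing $-|U_n|^{2\sigma}U_n$, hence is $O(t)=O(t_n)$ in every $H^\nu$. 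As $t_n\sim(\log n)^{\sigma\delta_2}n^{-\sigma(3/2-s)}$ and $\sigma(\tfrac32-s)>1>s+\theta$ (using $s<s_c$ and $\theta<\tfrac\sigma2(\tfrac32-s)-\tfrac12$), this difference is $\ll n^{(\nu-s)-\theta}$ and may be discarded; thus it suffices to estimate $w_n:=u_n^{\epsilon_n}-U_n-v_n^{\epsilon_n}$. Now the data of $u_n^{\epsilon_n}$ and of $U_n$ coincide outside $\{|x|\lesssim n^{-1}\}$, and $t_n\ll n^{-1}$ (again because $\sigma(\tfrac32-s)>1$); by finite propagation speed for \eqref{wave:main}, $u_n^{\epsilon_n}-U_n$, hence $w_n$, is supported in a fixed dilate of $\{|x|\le n^{-1}\}$ for all $t\in[0,t_n]$. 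This is the decisive point: by Poincaré's inequality it upgrades to $\|w_n\|_{H^j}\lesssim n^{j-k}\|w_n\|_{H^k}$ for $0\le j\le k$, so it is enough to prove the $\nu=2$ estimate, and every term in the equation for $w_n$ is localized on the scale $n^{-1}$.

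Writing $G(z)=|z|^{2\sigma}z$, that equation reads $\Box w_n+g_n w_n=\Delta v_n^{\epsilon_n}+R_n$ with zero initial data, where $g_n:=\int_0^1G'\big(U_n+v_n^{\epsilon_n}+\tau w_n\big)\,d\tau\ge 0$ and $R_n:=G(U_n)+G(v_n^{\epsilon_n})-G(U_n+v_n^{\epsilon_n})$, both localized on scale $n^{-1}$ and, by Lemma \ref{profilebound}(3)--(4), of size $\lesssim\lambda_n^2$ there. The proof is then a bootstrap: assuming $\sup_{[0,t_n]}\|w_n(t)\|_{H^\nu}\le Mn^{(\nu-s)-\theta}$ for $\nu=0,1,2$, one improves $M$ by energy estimates for the wave operator (complemented by $g_n$; see below). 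The genuine forcing is controlled by Lemma \ref{profilebound}(2): $\int_0^{t_n}\|\Delta v_n^{\epsilon_n}\|_{H^{\nu-1}}\,dt\lesssim(\log n)^{O(1)}\,t_n\,n^{\nu+1-s}\le n^{(\nu-s)-\theta}$, the last step being exactly the condition $\theta<\sigma(\tfrac32-s)-1$. The cross term $R_n$ — of size $\lambda_n^2$ on a ball of radius $\sim n^{-1}$, estimated in $H^{-1}\supset L^{6/5}$ when $\nu=0$ and with one extra factor $n(1+\lambda_n t)$ (Lemma \ref{profilebound}(4)) when $\nu=2$ — satisfies $\int_0^{t_n}\|R_n\|_{H^{\nu-1}}\,dt\le n^{(\nu-s)-\theta}$ provided $\theta\le\tfrac52-s-\sigma(\tfrac32-s)$; this inequality, together with $\theta<\tfrac\sigma2(\tfrac32-s)-\tfrac12$, amounts to $\sigma(\tfrac32-s)\le 2-\tfrac{2s}{3}$, which is a consequence of the hypothesis $s>\tfrac32-\tfrac2{2\sigma-1}$ (both sides being affine in $s$, it suffices to check the endpoints of the admissible interval for $s$). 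This is where the lower restriction on $s$ is used.

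The main obstacle is the term $g_n w_n$. Here $g_n\sim(2\sigma+1)|v_n^{\epsilon_n}|^{2\sigma}$ has size $\lambda_n^2$, far above all norms in play, so it cannot be put on the right-hand side; it must be carried in the energy as the nonnegative potential it is. What saves the estimate is the structure of $g_n$: by \eqref{profile2} and the periodicity of $V$, at each $x$ one has $g_n(t,x)\approx(2\sigma+1)|v_n^{\epsilon_n}(0,x)|^{2\sigma}\,H\big(t\,(v_n^{\epsilon_n}(0,x))^{\sigma}\big)$ with $H=|V|^{2\sigma}$ periodic, so $g_n=\overline g_n(x)+g_n^{\mathrm{osc}}(t,x)$, where $\overline g_n\ge 0$ is the \emph{time-independent} average, of size $\lambda_n^2$, while $g_n^{\mathrm{osc}}$, though also of size $\lambda_n^2$, is $\partial_t$ of a function of size only $O(\lambda_n)$ (namely $(v_n^{\epsilon_n}(0,x))^{\sigma}$ times a bounded primitive of $H-\langle H\rangle$). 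One then runs the energy estimates for the operator $\Box+\overline g_n$: there is no modulation term since $\overline g_n$ does not depend on $t$, and the contribution of $g_n^{\mathrm{osc}}w_n$ is integrated by parts in time, replacing $g_n^{\mathrm{osc}}$ by its $O(\lambda_n)$ primitive — and $\int_0^{t_n}\lambda_n\,dt=\lambda_n t_n=(\log n)^{\sigma(\delta_2-\delta_1)}$, so this costs only a power of $\log n$; the commutators with the spatial derivatives cost one more factor $n(1+\lambda_n t)$ each (Lemma \ref{profilebound}(4)), again harmless on $[0,t_n]$, and the $\overline g_n$-weighted control of $w_n$ from the energy is used to compensate the degeneracy of $\overline g_n$ near $\{v_n^{\epsilon_n}(0,x)=0\}$. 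I expect this bookkeeping — especially around $g_n^{\mathrm{osc}}$ and this edge — to be the technical heart of the proof; it is a refinement of the corresponding step in \cite{BTz08,STz19}. With it the bootstrap closes, giving $\sup_{[0,t_n]}\|w_n(t)\|_{H^\nu}\le Cn^{(\nu-s)-\theta}$ for $\nu=0,1,2$.

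Finally, since $0\le s<s_c\le 1$, interpolating the $\nu=0$ and $\nu=1$ bounds yields $\sup_{[0,t_n]}\|w_n(t)\|_{H^s}\le Cn^{-\theta}$, and, together with the reduction of the first paragraph, this is the stated estimate with the linear reference. Then, by the triangle inequality, Lemma \ref{profilebound}(1), and $\|\rho_{\epsilon_n}\ast S(t_n)(u_0,u_1)\|_{H^s}\lesssim\|(u_0,u_1)\|_{\mathcal H^s}=O(1)$,
$$\|u_n^{\epsilon_n}(t_n)\|_{H^s(\T^3)}\gtrsim\kappa_n(\lambda_n t_n)^s-O(1)=(\log n)^{s\sigma(\delta_2-\delta_1)-\delta_1}-O(1),$$
which tends to $+\infty$ as soon as $s\sigma(\delta_2-\delta_1)>\delta_1$, i.e. for $\delta_1$ chosen small enough relative to $\delta_2$.
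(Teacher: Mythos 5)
Your overall architecture is the right one and largely parallels the paper's: study the remainder with vanishing initial data, run an energy estimate on $[0,t_n]$, interpolate to get the $H^s$ bound, and conclude with the triangle inequality together with Lemma \ref{profilebound}(1) and the uniform bound on $\rho_{\epsilon_n}\ast S(t)(u_0,u_1)$. Your two preliminary reductions (replacing the linear reference by the nonlinear evolution of $\rho_{\epsilon_n}\ast(u_0,u_1)$, and localizing $w_n$ on the scale $n^{-1}$ by finite speed of propagation plus Poincar\'e) are correct but unnecessary; the paper simply uses the uniform $L^\infty$ bounds on derivatives of the regularized linear solution and works with the remainder $w_n=u_n^{\epsilon_n}-\rho_{\epsilon_n}\ast S(t)(u_0,u_1)-v_n^{\epsilon_n}$ directly. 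Your treatment of the forcing $\Delta v_n^{\epsilon_n}$ and of the cross term, and the arithmetic showing that the hypothesis $s>\frac32-\frac{2}{2\sigma-1}$ makes the exponents work for all admissible $\theta$, are consistent with the paper's computation.

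The genuine gap is the step you yourself flag as "the technical heart": the treatment of the potential term $g_nw_n\sim(2\sigma+1)|v_n^{\epsilon_n}|^{2\sigma}w_n$. Your premise that this term "cannot be put on the right-hand side" is incorrect, and the alternative you sketch (splitting $g_n$ into a time average plus an oscillation which is $\partial_t$ of an $O(\lambda_n)$ function, running the energy for $\Box+\overline g_n$, integrating by parts in time, controlling commutators and the degeneracy of $\overline g_n$) is never actually carried out — you only state that you expect the bookkeeping to close, and it is not obvious that it does (note also that $g_n$ depends on $U_n$ and $w_n$, not only on $v_n^{\epsilon_n}$, so the claimed oscillatory structure needs justification, and the positivity of $\overline g_n$ is of no direct help at the $H^1$ level of the energy). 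The correct and much simpler mechanism, which is exactly what the paper uses, is that $w_n(0)=\partial_tw_n(0)=0$, so $w_n(t)=\int_0^t\partial_tw_n(\tau)\,d\tau$ and hence $\|w_n(t)\|_{L^2}\lesssim t\,n^{1-s}e_n(t)$ (and similarly in $H^1$); the dangerous term then enters the Gronwall inequality with coefficient $\lambda_n^2\,t$, whose integral over $[0,t_n]$ is $(\lambda_n t_n)^2=(\log n)^{2\sigma(\delta_2-\delta_1)}$, i.e.\ only a power of $\log n$, and choosing $\delta_2$ small makes the resulting factor $e^{(\log n)^{2\sigma\delta_2}}$ subpolynomial, so the estimate closes with $e_n\lesssim n^{1-\sigma(\frac32-s)+o(1)}\le n^{-\theta}$. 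In short: the obstacle you built your scheme around is not an obstacle, and the scheme you propose in its place is left unproven, so as written the central estimate of the proposition is not established.
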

\begin{proof}
	Denote by $u_L^{\epsilon_n}(t)=\rho_{\epsilon_n}\ast S(t)(u_0,u_1)$ the linear solution with regularized data $\rho_{\epsilon_n}\ast (u_0,u_1)$\footnote{Since we work on $\T^3$, the convolution $\rho_{\epsilon}$ commutes with free propagators $\cos(t\sqrt{-\Delta})$ and $\frac{\sin(t\sqrt{-\Delta})}{\sqrt{-\Delta}}$. For the wave equation on general manifolds, one should take the linear solution as $S(t)(\rho_{\epsilon_n}\ast(u_0,u_1))$. }. Then for $k=0,1,2,3$,
	\begin{align}\label{boundlinear} 
 \|\nabla^ku_L^{\epsilon_n}(t)\|_{L^{\infty}(\T^3)}\lesssim 1,
		\end{align}
	uniformly in $n$, where the implicit constant depends only on finitely many norms of the smooth linear solution $S(t)(u_0,u_1)$.
	
	Denote by $f(v)=|v|^{2\sigma}v$. Consider the difference $w_n=u_n^{\epsilon_n}-u_L^{\epsilon_n}-v_n^{\epsilon_n}$, it satisfies the equation
	$$ \partial_t^2w_n-\Delta w_n=\Delta v_n^{\epsilon_n}-\big(f(v_n^{\epsilon_n}+u_L^{\epsilon_n}+w_n)-f(v_n^{\epsilon_n}) \big),\quad (w_n,\partial_tw_n)|_{t=0}=0.
	$$
	Define the semi-classical energy for $w_n$ as in \cite{BTz08} 
	\begin{equation}\label{energy:semi-classic}
	\begin{split}
	E_n(t):=&\frac{1}{n^{2(1-s)}}\big(\|\partial_tw_n(t)\|_{L^2(\T^3)}^2+\|\nabla w_n(t)\|_{L^2(\T^3)}^2 \big)\\
	+&\frac{1}{n^{2(2-s)}}\big(\|\partial_tw_n(t)\|_{H^1(\T^3)}^2+\|\nabla w_n(t)\|_{H^1(\T^3)}^2 \big).
	\end{split}
	\end{equation}
	Here the second line in \eqref{energy:semi-classic} is needed since we need to use it to control the $L^{\infty}$ norm of $w_n$.
	
	Let $F_n(t)=-\Delta v_n^{\epsilon_n}+f(v_n^{\epsilon_n}+u_L+w_n)-f(v_n^{\epsilon_n})$. From the energy estimate for the inhomogeneous linear wave equation, we have
	\begin{align*}
	\frac{1}{2}\frac{d}{dt}E_n(t)\leq &Cn^{-(1-s)} \|n^{-(1-s)}\partial_tw_n(t)\|_{L^2(\T^3)}\|F_n(t)\|_{L^2(\T^3)}\\+&Cn^{-(2-s)}\|n^{-(2-s)}\partial_tw_n(t)\|_{H^1(\T^3)}\|F_n(t)\|_{H^1(\T^3)},
	\end{align*}
	and this implies that
	\begin{align}\label{energybound}
 \frac{d}{dt}(E_n(t))^{1/2}\leq C\big(n^{-(1-s)}\|F_n(t)\|_{L^2(\T^3)}+n^{-(2-s)}\|F_n(t)\|_{H^1(\T^3)} \big).
		\end{align}
To simplify the notation, we denote by
$$ e_n(t):=\sup_{0\leq\tau\leq t}\big(E_n(t)\big)^{\frac{1}{2}}.
$$
Our goal is to show that $\sup_{t\in[0,t_n]}e_n(t)\lesssim n^{-\theta}$.
Write
$$ G_n(t):=f(v_n^{\epsilon_n}+u_L^{\epsilon_n}+w_n)-f(v_n^{\epsilon_n}),
$$
from Lemma \ref{profilebound}, we have, for $t\in[0,t_n]$ that
\begin{align}\label{Fnbound}
 \|F_n(t)\|_{L^2(\T^3)}\lesssim \kappa_n(\lambda_nt_n)^2n^{2-s}+\|G_n(t)\|_{L^2(\T^3)}. 
\end{align}
By the Taylor expansion,
$$ |G_n|\lesssim (|u_L^{\epsilon_n}|+|w_n|)(|v_n^{\epsilon_n}|^{2\sigma}+|u_L^{\epsilon_n}|^{2\sigma}+|w_n|^{2\sigma}), 
$$
hence
$$ \|G_n(t)\|_{L^2(\T^3)}\lesssim \|w_n(t)\|_{L^2(\T^3)}\big(1+\|v_n^{\epsilon_n}(t)\|_{L^{\infty}(\T^3)}^{2\sigma}+\|w_n(t)\|_{L^{\infty}(\T^3)}^{2\sigma}\big)+\|v_n^{\epsilon_n}(t)\|_{L^2(\T^3)}\|v_n^{\epsilon_n}(t)\|_{L^{\infty}(\T^3)}^{2\sigma-1},
$$
where we used \eqref{boundlinear}.
By writing $w_n(t,x)=\int_0^t\partial_tw_n(\tau,x)d\tau$ (since $w_n(0,\cdot)=0$), we obtain that
\begin{align}\label{Gnbound}
\|G_n(t)\|_{L^2(\T^3)}\lesssim &\int_0^t\|\partial_tw_n(\tau)\|_{L^2(\T^3)}d\tau\cdot\big(1+\|v_n^{\epsilon_n}(t)\|_{L^{\infty}(\T^3)}^{2\sigma}+\|w_n(t)\|_{L^{\infty}(\T^3)}^{2\sigma}\big) \notag\\
+&\|v_n^{\epsilon_n}(t)\|_{L^2(\T^3)}\|v_n^{\epsilon_n}(t)\|_{L^{\infty}(\T^3)}^{2\sigma-1}+1 \notag\\
\lesssim &tn^{1-s}e_n(t)(\lambda_n^2+\|w_n(t)\|_{L^{\infty}(\T^3)}^{2\sigma})+\kappa_n\lambda_n^{2-\frac{1}{\sigma}}n^{-s},
\end{align}
where we have used Lemma \ref{profilebound} to control $\|v_n^{\epsilon_n}(t)\|_{L^{\infty}}$.
Similarly, for $t\in[0,t_n]$, we have
\begin{align}\label{Fnbound1}
\|\nabla F_n(t)\|_{L^2(\T^3)}\lesssim \kappa_n(\lambda_nt_n)^3n^{3-s}+\|\nabla G_n(t)\|_{L^2(\T^3)}.
\end{align}
We need to estimate $\|w_n(t)\|_{L^{\infty}(\T^3)}$. From the Gagliardo-Nirenberg inequality,
\begin{align}\label{Linfty}
\|w_n(t)\|_{L^{\infty}(\T^3)}\lesssim \|w_n(t)\|_{H^2(\T^3)}^{\frac{3}{4}}\|w_n(t)\|_{L^2(\T^3)}^{\frac{1}{4}}\lesssim (n^{2-s}e_n(t))^{\frac{3}{4}}(te_n(t)n^{1-s})^{\frac{1}{4}}=t^{\frac{1}{4}}n^{\frac{7}{4}-s}e_n(t),
\end{align}
where we used $w_n(t)=\int_0^t\partial_tw_n(\tau,\cdot)d\tau$ again. Since $t\leq t_n=(\log n)^{\sigma\delta_2}n^{-\big(\frac{3}{2}-s\big)\sigma}$ and $\sigma\big(\frac{3}{2}-s\big)>1$, we have
\begin{align}\label{boundLinfty}
\|w_n(t)\|_{L^{\infty}(\T^3)}\lesssim n^{\frac{3}{2}-s}e_n(t).
\end{align}
Therefore,
\begin{align*}
n^{-(1-s)}\|F_n(t)\|_{L^2(\T^3)}\lesssim & \kappa_n(\lambda_nt_n)^2n+\kappa_n(\kappa_nn^{\frac{3}{2}-s })^{2\sigma-1}n^{-1}+t_ne_n(t)\big((\kappa_n n^{\frac{3}{2}-s})^{2\sigma}+(n^{\frac{3}{2}-s}e_n(t))^{2\sigma} \big)\\
\lesssim &(\log n)^{2\sigma(\delta_2-\delta_1)-\delta_1}n+(\log n)^{-2\sigma\delta_1}n^{(2\sigma-1)\big(\frac{3}{2}-s\big)-1}\\
+&n^{\big(\frac{3}{2}-s\big)\sigma}e_n(t)\big[(\log n)^{\sigma(\delta_2-2\delta_1)}+(\log n)^{\sigma\delta_2}(e_n(t))^{2\sigma} \big].
\end{align*}
Since $s>\frac{3}{2}-\frac{2}{2\sigma-1}$, we have $(2\sigma-1)\big(\frac{3}{2}-s\big)-1<1$, thus
\begin{align}\label{Fnbound2}
n^{-(1-s)}\|F_n(t)\|_{L^2(\T^3)}\lesssim (\log n)^{\sigma(2\delta_2-3\delta_1)}n+(\log n)^{\sigma\delta_2}n^{\big(\frac{3}{2}-s\big)\sigma}e_n(t)(1+(e_n(t))^{2\sigma}).
\end{align}

Next we estimate $|\nabla G_n|$ as
\begin{align*}
|\nabla G_n|\lesssim & |\nabla v_n^{\epsilon_n}|\big(1+|v_n^{\epsilon_n}|^{2\sigma-1}+|w_n|^{2\sigma-1} \big)\big(1+|w_n|\big)\\
+&\big(1+|v_n^{\epsilon_n}|^{2\sigma}+|w_n|^{2\sigma}\big)\big(1+|\nabla w_n| \big),
\end{align*}
where the implicit constants are independent of $n$, thanks to \eqref{boundlinear}. To estimate the $L^2$ norm of $\nabla G_n$, we organize the terms as
$$ \|\nabla v_n^{\epsilon_n}(1+|v_n^{\epsilon_n}|^{2\sigma-1}+|w_n|^{2\sigma-1})w_n \|_{L^2}\leq \|w_n\|_{L^2}\|\nabla v_n^{\epsilon_n}\|_{L^{\infty}}\big(1+\|v_n^{\epsilon_n}\|_{L^{\infty}}^{2\sigma-1}+\|w_n\|_{L^{\infty}}^{2\sigma-1}  \big),
$$
$$ \| (1+|v_n^{\epsilon_n}|^{2\sigma}+|w_n|^{2\sigma})\nabla w_n \|_{L^2}\leq \|\nabla w_n\|_{L^2}\big(1+\|v_n^{\epsilon_n}\|_{L^{\infty}}^{2\sigma}+\|w_n\|_{L^{\infty}}^{2\sigma}  \big),
$$
$$ \|\nabla v_n^{\epsilon_n}(1+|v_n^{\epsilon_n}|^{2\sigma-1}+|w_n|^{2\sigma-1})\|_{L^2}\leq \|\nabla v_n^{\epsilon_n}\|_{L^2}\big(1+\|v_n^{\epsilon_n}\|_{L^{\infty}}^{2\sigma-1}+\|w_n\|_{L^{\infty}}^{2\sigma-1}  \big),
$$
$$  \|(1+|v_n^{\epsilon_n}|^{2\sigma}+|w_n|^{2\sigma})\|_{L^2}\leq \big(1+\|v_n^{\epsilon_n}\|_{L^{\infty}}^{2\sigma-1}\|v_n^{\epsilon_n}\|_{L^2}+\|w_n\|_{L^{\infty}}^{2\sigma-1}\|w_n\|_{L^2}  \big).
$$
Putting them together and using
\begin{equation}\label{H^k} \|w_n(t)\|_{H^k(\T^3)}=\Big\|\int_0^t\partial_tw_n(\tau)d\tau\Big\|_{H^k(\T^3)}\leq n^{1+k-s}te_n(t),\quad k=0,1,
\end{equation}
we have
\begin{equation}\label{boundGn1}
\begin{split}
n^{-(2-s)}\|\nabla G_n(t)\|_{L^2(\T^3)}\lesssim & (\log n)^{\sigma\delta_2}n^{\big(\frac{3}{2}-s\big)\sigma}e_n(t)\big(1+(e_n(t))^{2\sigma} \big)\\
+&(\log n)^{\sigma(\delta_2-\delta_1)}n^{(2\sigma-1)\big(\frac{3}{2}-s\big)-1}\big(1+(e_n(t))^{2\sigma-1} \big)\\
\lesssim &(\log n)^{\sigma\delta_2}n^{\big(\frac{3}{2}-s\big)\sigma}e_n(t)\big(1+(e_n(t))^{2\sigma} \big)
+(\log n)^{\sigma\delta_2}n(1+e_n(t)^{2\sigma-1} ).
\end{split}
\end{equation}
We observe that
$$ \frac{de_n}{dt}\leq \Big|\frac{d}{dt}(E_n(t))^{1/2}\Big|.
$$
Therefore,
\begin{equation}\label{enboundfinal}
\begin{split}
\frac{de_n}{dt}\leq (\log n)^{3\sigma\delta_2}n+(\log n)^{\sigma\delta_2}n^{\sigma\big(\frac{3}{2}-s\big)}e_n(t)\big(1+(e_n(t))^{2\sigma}\big).
\end{split}
\end{equation}
By the Grownwall type argument, we obtain 
$$ e_n(t)\leq n^{1-\sigma\big(\frac{3}{2}-s\big)}(\log n)^{3\sigma\delta_2}e^{(\log n)^{2\sigma\delta_2}},\quad \forall t\in [0,t_n].
$$
Since $1<\sigma\big(\frac{3}{2}-s\big)$, for any $0<\theta<\frac{\sigma}{2}\big(\frac{3}{2}-s\big)-\frac{1}{2}$, we can choose $\delta_2>0$ sufficiently small, such that the right hand side is smaller than $n^{-\theta}$. Consequently, from \eqref{H^k}, 
$$ \|w_n(t)\|_{L^2(\T^3)}\leq n^{1-s}e_n(t)t\lesssim n^{1-s-\big(\frac{3}{2}-s\big)\sigma}(\log n)^{\delta_2\sigma}n^{-\theta}\lesssim n^{-s-\theta},\;\forall t\leq t_n.
$$
 Finally, the bound for the $H^s$ norm of $w_n(t)$ follows from the interpolation. This completes the proof of Proposition \ref{pertubation}. 
\end{proof}

\section{Proof of the main theorem}
First we recall the following property of finite propagation speed for the wave equation. 
\begin{lem}\label{FPS}
Let $w_1, w_2$ be two $C^{\infty}$ solutions of the nonlinear wave equation
$$ \partial_t^2w-\Delta w+|w|^{2\sigma}w=0.
$$ 	
If the initial data $(w_1(0),\partial_tw_1(0)), (w_2(0),\partial_tw_2(0))$ coincide on the ball $B(x_0,r_0)\subset \R^d$, then
for $0\leq t<r_0$, $(w_1(t),\partial_tw_1(t))=(w_2(t),\partial_tw_2(t) )$ on $B(x_0,r_0-t)$.
\end{lem}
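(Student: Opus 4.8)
The plan is to run the classical local energy estimate on backward light cones for the difference of the two solutions. Set $w=w_1-w_2$; it solves $\partial_t^2 w-\Delta w=-\big(F(w_1)-F(w_2)\big)$ with $F(v):=|v|^{2\sigma}v$, and since the Cauchy data of $w_1$ and $w_2$ coincide on $B(x_0,r_0)$ we have $w(0,\cdot)=\partial_t w(0,\cdot)=0$ on $B(x_0,r_0)$. Since $2\sigma\geq 1$, the map $F$ is $C^1$, hence locally Lipschitz; as $w_1,w_2$ are smooth they are bounded on any fixed compact solid cone, so there $|F(w_1)-F(w_2)|\leq C|w|$ for a constant $C$ depending only on $\sup(|w_1|+|w_2|)$ over that cone.

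Fix a point $(t^*,x^*)$ with $0\leq t^*<r_0$ and $x^*\in B(x_0,r_0-t^*)$, and let $K:=\{(t,x):0\leq t\leq t^*,\ |x-x^*|\leq t^*-t\}$ be the backward light cone with apex $(t^*,x^*)$. Its base $\{x:|x-x^*|\leq t^*\}$ is contained in $B(x_0,r_0)$, since $|x-x_0|\leq|x-x^*|+|x^*-x_0|<t^*+(r_0-t^*)=r_0$; hence $w$ and $\partial_t w$ vanish on the base. Introduce the local energy $e(t):=\tfrac12\int_{|x-x^*|\leq t^*-t}\big(|\partial_t w(t,x)|^2+|\nabla w(t,x)|^2\big)\,dx$. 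Differentiating in $t$ (taking into account the shrinking cross-section), integrating $\nabla w\cdot\nabla\partial_t w$ by parts, and substituting the equation for $w$, one gets $e'(t)=-\int_{|x-x^*|\leq t^*-t}\partial_t w\,\big(F(w_1)-F(w_2)\big)\,dx+\int_{|x-x^*|=t^*-t}\big(\partial_\nu w\,\partial_t w-\tfrac12|\partial_t w|^2-\tfrac12|\nabla w|^2\big)\,dS$. By Cauchy--Schwarz, $\partial_\nu w\,\partial_t w\leq|\nabla w|\,|\partial_t w|\leq\tfrac12|\nabla w|^2+\tfrac12|\partial_t w|^2$, so the boundary integral is $\leq 0$, and with the bound on $F(w_1)-F(w_2)$ we obtain $e'(t)\leq C\,e(t)^{1/2}\,\|w(t)\|_{L^2(B(x^*,t^*-t))}$.

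Next I would control $\|w(t)\|_{L^2}$ by the energy: for $|x-x^*|\leq t^*-t$ the segment $\{(\tau,x):0\leq\tau\leq t\}$ lies in $K$ and $w(0,x)=0$, so $w(t,x)=\int_0^t\partial_t w(\tau,x)\,d\tau$, and since $B(x^*,t^*-t)\subset B(x^*,t^*-\tau)$ for $\tau\leq t$ this yields $\|w(t)\|_{L^2(B(x^*,t^*-t))}\leq\int_0^t(2e(\tau))^{1/2}\,d\tau$. Inserting this and setting $A(t):=\sup_{0\leq\tau\leq t}e(\tau)^{1/2}$, we find $e(t)\leq C\int_0^t e(\tau)^{1/2}\Big(\int_0^\tau e(s)^{1/2}\,ds\Big)\,d\tau\leq C\,t^2A(t)^2$, hence $A(t)^2\leq C\,t^2A(t)^2$; for $t\leq t_1$ with $C\,t_1^2<1$ this forces $e\equiv 0$ on $[0,t_1]$. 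Since $C$ is uniform over $K$, one restarts from time $t_1$ (the cross-section $B(x^*,t^*-t_1)$ playing the role of a new base, on which $w$ now vanishes) and, after finitely many steps of fixed size, concludes $e\equiv 0$ on all of $[0,t^*]$.

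Therefore $\nabla w=\partial_t w=0$ on the interior of $K$, so $w$ is constant there; as it vanishes on the base, $w\equiv 0$ on $K$, so $w(t^*,x^*)=0$ and, by continuity of $\partial_t w$, $\partial_t w(t^*,x^*)=0$. As $(t^*,x^*)$ was arbitrary with $0\leq t^*<r_0$ and $x^*\in B(x_0,r_0-t^*)$, we conclude $(w_1(t),\partial_t w_1(t))=(w_2(t),\partial_t w_2(t))$ on $B(x_0,r_0-t)$ for all $0\leq t<r_0$. The scheme is entirely classical; the only slightly delicate point is closing the energy inequality, which I would not do by dividing by $e(t)^{1/2}$ (it may vanish) but through the monotone quantity $A(t)$ and the short bootstrap above.
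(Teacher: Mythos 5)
Your proof is correct and follows essentially the same route as the paper: a local energy estimate on (truncated/backward) light cones for the difference of the two solutions, with the boundary flux shown to have a favorable sign and the nonlinear term handled by a local Lipschitz bound valid since $2\sigma\geq 1$ and the solutions are smooth, hence bounded on the cone. The only cosmetic difference is how the zeroth-order term is closed: the paper adds $u$ to both sides and includes $|u|^2$ in the energy density so that a plain Gronwall argument applies, whereas you recover $\|w(t)\|_{L^2}$ from $w(t,x)=\int_0^t\partial_t w(\tau,x)\,d\tau$ and close with a short-time bootstrap and iteration — both are valid.
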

\begin{proof}
Without loss of generality, we may assume that $x_0=0$. Take the difference $u=w_1-w_2$, then
$$ \partial_t^2u-\Delta u+u=V(t,x)u,
$$
where 
$$
V(t,x)=(2\sigma+1)\int_0^1|(1-\lambda)w_1(t,x)+\lambda w_2(t,x)|^{2\sigma}d\lambda+1\in L_{\loc}^{\infty}.
$$ 
For $0\leq t_1<t_2<r_0$, denote by  $\mathcal{C}_{t_1,t_2}(r_0):=\{(t,x): t_1\leq t\leq t_2, |x|\leq r_0-t \}.$ Define the local energy density
$$ e(t,x):=\frac{1}{2}(|\nabla u(t,x)|^2+|\partial_tu(t,x)|^2+|u(t,x)|^2).
$$
Then a direct calculation yields
\begin{equation*}
\begin{split}
\int_{\mathcal{C}_{0,t_0}(r_0)}\partial_tu(\partial_t^2-\Delta+1)udxdt=&\int_0^{t_0}\int_{|x|\leq r_0-t}\frac{d}{dt}e(t,x)dxdt-\int_0^{t_0}\int_{|x|=r_0-t}\partial_t u\partial_rud\sigma(x)dt,
\end{split}
\end{equation*}
where $\partial_ru=\frac{x}{|x|}\cdot\nabla u$ and $r=|x|$. Notice that $\frac{d}{dt}\mathbf{1}_{|x|\leq r_0-t}=-\delta_{|x|=r_0-t}$, we have
\begin{align*}
\int_{\mathcal{C}_{0,t_0}(r_0)}\partial_tu (\partial_t^2-\Delta+1)udxdt=&\Big[\int_{|x|\leq r_0-t}e(t,x)dx\Big]_{t=0}^{t=t_0}+\int_0^{t_0}\int_{|x|=r_0-t}\frac{1}{2}\big[|\partial_tu-\partial_ru|^2+|u|^2\big]d\sigma(x)dt\\
\geq & \Big[\int_{|x|\leq r_0-t}e(t,x)dx\Big]_{t=0}^{t=t_0}.
\end{align*}
Using the equation $\partial_t^2u-\Delta u+u=Vu$,  we have
$$ E(t_0)\leq E(0)+\Big|\int_{\mathcal{C}_{0,t_0}(r_0)}Vu\cdot \partial_t udxdt\Big|\leq E(0)+\|V\|_{L^{\infty}([0,r_0]\times B(0;r_0) )}\int_0^{t_0}E(t)dt,
$$
for all $0\leq t_0<r_0$, where $E(t)=\int_{|x|\leq r_0-t}e(t,x)dx$ is the local energy.
Since $E(0)=0$, from Gronwall's inequality, we deduce that $E(t)\equiv 0$ for all $0\leq t<r_0$. This completes the proof of Lemma \ref{FPS}.
\end{proof}
To prove Theorem~\ref{thm:main}, we need to do some preparations. 
We use the coordinate system $x=(x_1,x')$ near the origin. Let $z^k=(z^k_1,0)$ with $z^k_1=\frac{1}{k}$. Let $n_k=\mathrm{e}^{\mathrm{e}^k}$, and define
 $$v_{0,k}(x):=(\log n_k)^{-\delta_1} n_k^{\frac{3}{2}-s}\varphi(n_k(x_1-z^k_{1}),n_kx')=v_{n_k}(0,\cdot-z^k),$$ where $v_n(0)$ is the initial data of the ill-posed profile defined in \eqref{profile1}. Note that there exists $k_0$, such that for all $k\geq k_0$, the supports of $v_{0,k}$ are pairwise disjoint. Moreover, 
for $k_0\leq k_1<k_2$, $$ \mathrm{dist}\big(\text{supp}(v_{0,k_1}), \text{supp}(v_{0,k_2})\big)\sim \frac{1}{k_1}-\frac{1}{k_2}.
$$
Denote by $B_k=B(z^k,r_k)$, where $r_k=\frac{1}{k^3}$. With sufficiently large $k_0$, the balls $B_k,k\geq k_0$ are mutually disjoint. Moreover, $\text{supp}(\rho_{\epsilon_{n_k}}\ast v_{0,k})\subset B_k$ (recall that $\epsilon_{n_k}=\frac{1}{100n_k}$). 
Another simple observation is that
$$ \text{dist}\big(\text{supp}(\rho_{\epsilon_{n_k}}\ast(v_0-v_{0,k})), B_k \big)\gtrsim \frac{1}{k^2},
$$
where $$v_0=\sum_{k\geq k_0}v_{0,k}\in H^s(\T^3).$$ 
In particular, for any $(f,g)\in C^{\infty}\times C^{\infty}$, $\rho_{\epsilon_{n_k}}\ast((f,g)+(v_{0},0) )$ coincides with $\rho_{\epsilon_{n_k}}\ast((f,g)+(v_{0,k},0) )$ on $B_k$. Let $\widetilde{B}_k=B(z^k,r_k/3)$ be a slightly smaller ball. We observe that for $k$ large enough,
$$ \text{supp}(\rho_{\epsilon_{n_k}}\ast v_{0,k} )\subset \widetilde{B}_k.
$$
Now we are able to prove Theorem \ref{thm:main}. 
\begin{proof}[Proof of Theorem \ref{thm:main}]
Define
$$ S=C^{\infty}(\T^3)\times C^{\infty}(\T^3)+\Big\{\Big(\sum_{k=k_1}^{\infty}v_{0,k},0\Big): k_1\geq k_0 \Big\}.
$$
Using
$$ \Big\|\sum_{k=k_1}^{\infty}v_{0,k}\Big\|_{H^s(\T^3)}\leq \sum_{k=k_1}^{\infty}\|v_{0,k}\|_{H^s(\T^3)}\leq \sum_{k=k_1}^{\infty}\mathrm{e}^{-k\delta_1}\rightarrow 0 \text{ as }k_1\rightarrow\infty,
$$
we deduce $S$ is dense in $\mathcal{H}^s(\T^3)$. Now fix $(f,g)\in S$. Then by definition, there exists $(u_0,u_1)\in C^{\infty}\times C^{\infty}$ and $k_1\geq k_0$, such that
$$ (f,g)=(u_0,u_1)+\Big(\sum_{k=k_1}^{\infty}v_{0,k},0\Big).
$$
Our goal is to show that, for any $N>0$ and any $\delta>0$, there exist $\tau_N\in[0,1]$ and $0<\epsilon<\delta$, such that the solution $u^{\epsilon}$ to \eqref{wave:main} with initial data $\rho_{\epsilon}\ast(f,g)$ satisfies
\begin{equation}\label{blowup}
\|u^{\epsilon}(\tau_N)\|_{H^s(\T^3)}>N.
\end{equation}
First we choose $k\geq k_1$, large enough, such that 
$$ \kappa_{n_k}(\lambda_{n_k}t_{n_k})^s>N,\quad  \epsilon_k=\frac{1}{100n_k}<\delta.
$$
This can be achieved by choosing $\delta_1<\delta_2$ such that 
$s\sigma(\delta_2-\delta_1)>\delta_1$. Recall that the parameters $\kappa_{n_k}=\mathrm{e}^{-k\delta_1}, \lambda_{n_k}t_{n_k}=\mathrm{e}^{(\delta_2-\delta_1)k\sigma}$ are given by \eqref{parameters}.
 Let $\widetilde{u}_k$ be the solution of \eqref{wave:main} with the initial data $\rho_{\epsilon_{n_k}}\ast (u_0,u_1)+\rho_{\epsilon_{n_k}}\ast(v_{0,k},0)$. Let $\widetilde{v}_k$ be the solution of 
$ \partial_t^2\widetilde{v_k}+|\widetilde{v_k}|^{2\sigma}\widetilde{v_k}=0
$ 
with the initial data $\rho_{\epsilon_{n_k}}\ast (v_{0,k},0)$. We remark that $\widetilde{v}_k,\widetilde{u}_k$ are just $v_{n_k}^{\epsilon_{n_k}},u_{n_k}^{\epsilon_{n_k}}$ in Proposition \ref{pertubation} up to translation. In particular,
\begin{equation}\label{lowerbound1}
\|\widetilde{u}_k(t_{n_k})\|_{H^s(\T^3)}\gtrsim (\log n_k)^{s\sigma(\delta_2-\delta_1)-\delta_1},
\end{equation}
and
\begin{equation}\label{approximation}
\|\widetilde{u}_k(t_{n_k})-\rho_{\epsilon_{n_k}}\ast S(t_{n_k})(u_0,u_1)-\widetilde{v}_k(t_{n_k})\|_{H^s(\T^3)}\lesssim n_k^{-\theta}.
\end{equation}

We have that supp$(\widetilde{v}_k(t))\subset \widetilde{B}_k$ for all $t\in[0,t_{n_k}]$. Now we apply Lemma \ref{FPS} to $\widetilde{u}_k$ and $u^{\epsilon_{n_k}}$. Since at $t=0$, 
$ (u^{\epsilon_{n_k}}(0),\partial_tu^{\epsilon_{n_k}}(0))|_{B_k}=(\widetilde{u}_k(0),\partial_t\widetilde{u}_k(0) )|_{B_k},
$
we deduce that
$$ (u^{\epsilon_{n_k}}(t),\partial_tu^{\epsilon_{n_k}}(t))|_{B(z^k,r_k-t)}=(\widetilde{u}_k(t),\partial_t\widetilde{u}_k(t) )|_{B(z^k,r_k-t)},\quad \forall 0\leq t< r_k.
$$
In particular, for large $k$,
\begin{equation}\label{consequenceFPS} (u^{\epsilon_{n_k}}(t),\partial_tu^{\epsilon_{n_k}}(t))|_{B(z^k,r_k/2)}=(\widetilde{u}_k(t),\partial_t\widetilde{u}_k(t) )|_{B(z^k,r_k/2)},\quad \forall t\in[0,t_{n_k}].
\end{equation}
\begin{lem}\label{restriction}
Assume that $s_1\geq 0$. Let $u\in H^{s_1}(\T^3)$ and $\chi\in C_c^{\infty}(\T^3)$. Then there exists $A>0$, depending only on the function $\chi$ and $s_1$, such that for any $R\geq 1$ 
$$ \|(1-\chi(Rx))u\|_{H^{s_1}(\T^3)}+\|\chi(Rx)u\|_{H^{s_1}(\T^3)}\leq AR^{s_1}\|u\|_{H^{s_1}(\T^3)}.
$$	
\end{lem}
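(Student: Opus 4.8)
The plan is to deduce the lemma from a single multiplier estimate: multiplication by $\chi(Rx)$ is bounded on $H^{s_1}(\T^3)$ with operator norm $O(R^{s_1})$, uniformly for $R\ge 1$. Granting this, the bound for $1-\chi(Rx)$ is immediate from $\|(1-\chi(Rx))u\|_{H^{s_1}}\le\|u\|_{H^{s_1}}+\|\chi(Rx)u\|_{H^{s_1}}$ together with the elementary inequality $\|u\|_{H^{s_1}}\le R^{s_1}\|u\|_{H^{s_1}}$, valid since $R\ge 1$; adding the two contributions yields the stated inequality with a constant of the form $1+2A$. (Because $\chi$ has small support, $\chi(R\,\cdot)$ is supported in a shrinking neighbourhood of the origin for every $R\ge 1$ and hence descends to a smooth function on $\T^3$; I will freely regard it as such.)

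The first step is the case of an integer exponent $s_1=m\in\N$. By the Leibniz rule, for every multi-index $\alpha$ with $|\alpha|\le m$,
$$\partial^\alpha\big(\chi(Rx)\,u\big)=\sum_{\beta\le\alpha}\binom{\alpha}{\beta}R^{|\beta|}\,(\partial^\beta\chi)(Rx)\,\partial^{\alpha-\beta}u.$$
Since $\|(\partial^\beta\chi)(R\,\cdot)\|_{L^\infty}=\|\partial^\beta\chi\|_{L^\infty}$ is independent of $R$, and $R^{|\beta|}\le R^m$ whenever $|\beta|\le m$ and $R\ge 1$, taking $L^2$ norms and summing over $|\alpha|\le m$ gives $\|\chi(Rx)u\|_{H^m}\le A_m\,R^m\|u\|_{H^m}$, with $A_m$ depending only on finitely many derivatives of $\chi$ and on $m$. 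This is the only point where the hypothesis $R\ge 1$ is used essentially, and it is precisely what keeps the constant independent of $R$.

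The second step handles a general real exponent $s_1\ge 0$ by interpolation. Choose integers $m_0\le s_1\le m_1$ and write $s_1=(1-\theta)m_0+\theta m_1$ with $\theta\in[0,1]$. On $\T^3$ the spaces $H^m$ are weighted $\ell^2$ spaces in Fourier coordinates, so $H^{s_1}(\T^3)=[H^{m_0}(\T^3),H^{m_1}(\T^3)]_\theta$ by complex interpolation. The multiplication operator $M_R:u\mapsto\chi(Rx)u$ is bounded at both endpoints with norms controlled by Step 1, so the interpolation theorem for linear operators gives
$$\|M_R\|_{H^{s_1}\to H^{s_1}}\le\|M_R\|_{H^{m_0}\to H^{m_0}}^{\,1-\theta}\,\|M_R\|_{H^{m_1}\to H^{m_1}}^{\,\theta}\le\big(A_{m_0}R^{m_0}\big)^{1-\theta}\big(A_{m_1}R^{m_1}\big)^{\theta}=A\,R^{s_1},$$
with $A:=A_{m_0}^{1-\theta}A_{m_1}^{\theta}$ depending only on $\chi$ and $s_1$. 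Combined with the first paragraph this proves the lemma.

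I do not anticipate a genuine obstacle; the argument is entirely standard. The only subtlety worth flagging is that the constant must not degenerate as $R\to\infty$, which the computation above guarantees thanks to $R\ge 1$ and the invariance of sup-norms under the rescaling $x\mapsto Rx$. If one prefers to avoid interpolation, the non-integer case can instead be done by a Littlewood--Paley decomposition: $\chi(R\,\cdot)$ has Fourier support essentially in $\{|\xi|\lesssim R\}$, so a Paley block $P_N(\chi(Rx)u)$ only interacts with $P_{\lesssim N+R}u$, and summing over $N$ reproduces the same $R^{s_1}$ loss — slightly more cumbersome, but self-contained.
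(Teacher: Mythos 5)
Your argument is correct and follows the same route as the paper: the integer case by a direct Leibniz-rule computation (with $R\ge 1$ keeping the constant uniform), and the general exponent $s_1\ge 0$ by interpolation between integer Sobolev spaces, with the $1-\chi(Rx)$ term absorbed by the triangle inequality. You have simply written out the details that the paper's two-line proof leaves implicit.
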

\begin{proof}
First for $s_1\in\N$, the proof follows from the direct calculation. For general $s_1\geq 0$, the conclusion follows from the interpolation. 
\end{proof}
Take $\chi\in C_c^{\infty}(\R^3)$, such that $\chi(x)\equiv 1$ if $|x|<\frac{1}{3}$ and $\chi\equiv 0$ if $|x|\geq \frac{1}{2}$. Define $\chi_k(x):=\chi((x-z^k)/r_k)$, hence $\chi_k|_{\widetilde{B}_k}\equiv 1$ and $\chi_k|_{(B(z^k,r_k/2))^c}\equiv 0$. Then \eqref{consequenceFPS} is translated to
$$ \chi_k(x)(u^{\epsilon_{n_k}}(t),\partial_tu^{\epsilon_{n_k}}(t) )=\chi_k(x)(\widetilde{u}_k(t),\partial_t\widetilde{u}_k(t) ),\quad \forall t\in[0,t_{n_k}].
$$
From Lemma \ref{restriction},
$$ \|u^{\epsilon_{n_k}}(t_{n_k})\|_{H^s(\T^3)}\gtrsim r_k^{s}\|\chi_ku^{\epsilon_{n_k}}(t_{n_k})\|_{H^s(\T^3)}\sim (\log\log n_k)^{-3s}\|\chi_k(x)\widetilde{u}_k(t_{n_k})\|_{H^s(\T^3)}.
$$
Therefore,
\begin{equation*}
\begin{split}
\|\chi_k(x)\widetilde{u}_k(t_{n_k})\|_{H^s(\T^3)}\geq & \|\widetilde{u}_k(t_{n_k})\|_{H^s(\T^3)}-\|(1-\chi_k)\widetilde{u}_k(t_{n_k}) \|_{H^s(\T^3)}\\
=&\|\widetilde{u}_k(t_{n_k})\|_{H^s(\T^3)}-\|(1-\chi_k)(\widetilde{u}_k(t_{n_k})-\widetilde{v}_k(t_{n_k}))\|_{H^s(\T^3)} ,
\end{split}
\end{equation*}
where in the last equality, we use the fact that $(1-\chi_k)\widetilde{v}_k(t_{n_k})=0$, thanks to the support property of $\widetilde{v}_k$. Therefore, we have
\begin{equation}\label{final-lower-bound}
\begin{split}
\|u^{\epsilon_{n_k}}(t_{n_k})\|_{H^s(\T^3)}\gtrsim &(\log\log n_k)^{-3s}\|\widetilde{u}_k(t_{n_k})\|_{H^s(\T^3)}-(\log\log n_k)^{-3s}
\|(1-\chi_k)\rho_{\epsilon_{n_k}}\ast S(t_{n_k})(u_0,u_1)\|_{H^s(\T^3)}\\
-&(\log\log n_k)^{-3s}\|(1-\chi_k)\big(\widetilde{u}_k(t_{n_k})-\rho_{\epsilon_{n_k}}\ast S(t_{n_k})(u_0,u_1)-\widetilde{v}_k(t_{n_k}) \big)\|_{H^s(\T^3)}.
\end{split}
\end{equation}
Applying Lemma \ref{restriction} again, we have
\begin{align}\label{final}
\|u^{\epsilon_{n_k}}(t_{n_k})\|_{H^s(\T^3)}\gtrsim (\log\log n_k)^{-3s}(\log n_k)^{s\sigma(\delta_2-\delta_1)-\delta_1}-\|\rho_{\epsilon_{n_k}}\ast S(t_{n_k})(u_0,u_1)\|_{H^s(\T^3)}-n_k^{-\theta}.
\end{align}
Since
$$ \|\rho_{\epsilon_{n_k}}\ast S(t_{n_k})(u_0,u_1)\|_{H^s(\T^3)}\lesssim 1,
$$
uniformly in $\epsilon_{n_k}$, 
by choosing $\delta_1>0$ small such that $s\sigma(\delta_2-\delta_1)-\delta_1>0$, the left hand side of \eqref{final} tends to $+\infty$ as $k\rightarrow\infty$. This completes the proof of Theorem \ref{thm:main}.
\end{proof}

Finally, we prove Corollary \ref{Gdelta}:
Recall the definition of the pathological set:
\begin{align*} \mathcal{P}:=\{(f,g)\in\mathcal{H}^s(\T^3):\,\,&\text{the solution } u^{\epsilon}(t) \text{ of \eqref{wave:main} with initial data } \rho_{\epsilon}\ast (f,g),\\ &
\text{satisfies the property }\quad
\limsup_{\epsilon\rightarrow 0} \|u^{\epsilon}(t)\|_{L^{\infty}([0,1];H^s(\T^3))}=+\infty \;  \}.
\end{align*}
For simplicity, below we will denote $u^{\epsilon}(t)=\Phi(t)(\rho_{\epsilon}\ast(f,g))$ the solution of \eqref{wave:main} with initial data $\rho_{\epsilon}\ast(f,g)$.   Obviously, the set
\begin{align*}
\mathcal{O}:=\{(f,g)\in\mathcal{H}^s(\T^3):&  \limsup_{k\rightarrow\infty}\|\Phi(t)(\rho_{\epsilon_{n_k}}\ast(f,g))\|_{L^{\infty}([0,1];H^s(\T^3) )}=\infty\}
\end{align*}
is contained in $\mathcal{P}$. From the proof of Theorem \ref{thm:main} in the last paragraph, $S\subset\mathcal{O}$, hence $\mathcal{O}$ is dense. It remains to show that $\mathcal{O}$ is a $G_{\delta}$ set, that is, a countable intersection of open sets. Note that $$\mathcal{O}=\bigcap_{N=1}^{\infty}\mathcal{O}_N, $$
where
\begin{align*}
	\mathcal{O}_N:=\{(f,g)\in\mathcal{H}^s(\T^3): \limsup_{k\rightarrow\infty}\|\Phi(t)(\rho_{\epsilon_{n_k}}\ast (f,g) )\|_{L^{\infty}([0,1];H^s(\T^3) )}>N \}.
\end{align*}
By definition, 
$$ \mathcal{O}_N=\bigcap_{k_0=1}^{\infty}\bigcup_{k=k_0}\mathcal{O}_{N,k},
$$
where
\begin{align*}
	\mathcal{O}_{N,k}:=\{(f,g)\in\mathcal{H}^s(\T^3): \|\Phi(t)(\rho_{\epsilon_{n_k}}\ast (f,g) )\|_{L^{\infty}([0,1];H^s(\T^3) )}>N \}.
\end{align*}
It suffices to show that, for fixed $N,k$, $\mathcal{O}_{N,k}$ is an open set. Indeed, pick $(f_0,g_0)\in\mathcal{O}_{N,k}$, denote by $$r_0:=\|\Phi(t)(\rho_{\epsilon_{n_k}}\ast (f_0,g_0) )\|_{L^{\infty}([0,1];H^s(\T^3))}-N>0.$$ From the inequality
$$ \|\rho_{\epsilon_{n_k}}\ast (f,g)\|_{\mathcal{H}^2(\T^3)}\leq C\epsilon_{n_k}^{-(2-s)}\|(f,g)\|_{\mathcal{H}^s(\T^3)},
$$
the Sobolev embedding $H^2(\T^3)\hookrightarrow L^{\infty}(\T^3)$,
and the global well-posedness theory in $\mathcal{H}^2(\T^3)$,we deduce that there exists a uniform constant $C_0>0$, such that
\begin{align}\label{comparison}
 &\sup_{t\in[0,1]}\|\Phi(t)(\rho_{\epsilon_{n_k}}\ast (f_0,g_0) )-\Phi(t)(\rho_{\epsilon_{n_k}}\ast (f,g) ) \|_{H^1(\T^3)}\\ \leq &C_0\epsilon_{n_k}^{-(2-s)(2\sigma+1)}(\|(f_0,g_0)\|_{\mathcal{H}^s(\T^3)}^{2\sigma}+\|(f,g)\|_{\mathcal{H}^s(\T^3)}^{2\sigma} )\|(f-f_0,g-g_0) \|_{\mathcal{H}^s(\T^3)}.
\end{align}
Choosing $$\delta<\frac{r_0\epsilon_{n_k}^{(2-s)(2\sigma+1)}}{2C_0}\cdot (1+\|(f_0,g_0)\|_{\mathcal{H}^s(\T^3)}^{2\sigma}+\|(f,g)\|_{\mathcal{H}^s(\T^3)}^{2\sigma})^{-1},
 $$
then if $\|(f,g)-(f_0,g_0)\|_{\mathcal{H}^s(\T^3)}<\delta$, by \eqref{comparison} we deduce that
$$ \|\Phi(t)(\rho_{\epsilon_{n_k}}\ast (f,g) )\|_{H^s(\T^3)}>N.
$$
This shows that $\mathcal{O}_{N,k}$ is open. The proof of Corollary \ref{Gdelta} is now complete.


\begin{flushright}
	
\end{flushright}
\end{document}